\newtheorem{theorem}{Theorem}[section]
\newtheorem{lemma}[theorem]{Lemma}
\begin{document}
\textwidth 150mm \textheight 225mm
\title{Distance spectral conditions for $ID$-factor-critical and fractional $[a, b]$-factor of graphs\thanks{Supported by the
National Natural Science Foundation of China (No. 12271439).}}
\author{{Tingyan Ma$^{a,b}$, Ligong Wang$^{a,b,}$\thanks{Corresponding author.
E-mail address: lgwangmath@163.com}}\\
{\small $^a$School of Mathematics and Statistics, Northwestern
Polytechnical University,}\\ {\small  Xi'an, Shaanxi 710129,
P.R. China.}\\
{\small $^b$ Xi'an-Budapest Joint Research Center for Combinatorics, Northwestern
Polytechnical University,}\\
{\small Xi'an, Shaanxi 710129,
P.R. China. }\\
{\small E-mail: matingylw@163.com, lgwangmath@163.com} }
\date{}
\maketitle
\begin{center}
\begin{minipage}{120mm}
\vskip 0.3cm
\begin{center}
{\small {\bf Abstract}}
\end{center}
{\small Let $G=(V(G), E(G))$ be a graph with vertex set $V(G)$ and edge set $E(G)$. A graph is $ID$-factor-critical if for every independent set $I$ of $G$ whose size has the same parity as $|V(G)|$, $G-I$ has a perfect matching. For two positive integers $a$ and $b$ with $a\leq b$, let $h$: $E(G)\rightarrow [0, 1]$ be a function on $E(G)$ satisfying $a\leq\sum _{e\in E_{G}(v_{i})}h(e)\leq b$ for any vertex $v_{i}\in V(G)$. Then the spanning subgraph with edge set $E_{h}$, denoted by $G[E_{h}]$, is called a fractional $[a, b]$-factor of $G$ with indicator function $h$, where $E_{h}=\{e\in E(G)\mid h(e)>0\}$ and $E_{G}(v_{i})=\{e\in E(G)\mid e$ is incident with $v_{i}$ in $G$\}. A graph is defined as a fractional $[a, b]$-deleted graph if for any $e\in E(G)$, $G-e$ contains a fractional $[a, b]$-factor. For any integer $k\geq 1$, a graph has a $k$-factor if it contains a $k$-regular spanning subgraph. In this paper, we firstly give a distance spectral radius condition of $G$ to guarantee that $G$ is $ID$-factor-critical. Furthermore, we provide sufficient conditions in terms of distance spectral radius and distance signless Laplacian spectral radius for a graph to contain a fractional $[a, b]$-factor, fractional $[a, b]$-deleted-factor and $k$-factor.
\vskip 0.1in \noindent {\bf Keywords}: Distance spectral radius; $ID$-factor-critical; Factional $[a, b]$-factor; Spanning subgraph. \vskip
0.1in \noindent {\bf AMS Subject Classification (2020)}: \ 05C50, 05C35.}
\end{minipage}
\end{center}

\section{Introduction }
\label{sec:ch6-introduction}
Let $G=(V(G), E(G))$ be a simple graph with vertex set $V(G)=\{v_{1}, v_{2}, \ldots, v_{n}\}$ and edge set $E(G)$. We use $|V(G)|=n$ and $|E(G)|=e(G)$ to denote the order and size of $G$.
The complement $\overline{G}$ of $G$ is the graph with $V(\overline{G})=V(G)$ and two distinct vertices in $\overline{G}$ are adjacent if and only if they are non-adjacent in $G$. We use $K_{n}$ and $I_{n}$ to denote a complete graph and an independent set of order $n$. For any nonempty subset $S\subseteq V(G)$, let $G[S]$ be the subgraph of $G$ induced by $S$, and let $G-S$ be the subgraph of $G$ by deleting the vertices in $S$ together with their incident edges. If $S=\{v\}$ we write $G-v$. Similarly, for any nonempty subset $E^{'}\subseteq E(G)$, we denote by $G-E^{'}$ the subgraph of $G$ obtained from $G$ by deleting the edges in $E^{'}$. If $E^{'}=\{e\}$ we write $G-e$. We use $d_{G}(v_{i})$ and $\delta$ to denote the degree of $v_{i}\in V(G)$ and the minimum degree, respectively. The number of isolated vertices of $G$, denoted by $i(G)$, is the cardinality of a set of vertices which are not adjacent to any vertex. Denote by $o(G)$ the number of odd components of $G$, that is, the cardinality of a component with odd number of vertices.

In 1971, Graham and Pollak \cite{Graham1971} introduced the concept of distance matrix of a graph to study the routing of messages or data between computers. The distance matrix $D(G)$ is a real symmetric matrix whose $(i, j)$-entry is the distance $d_{G}(v_{i}, v_{j})$ between $v_{i}$ and $v_{j}$, and the distance is defined as the length of a shortest path from $v_{i}$ to $v_{j}$. The transmission of a vertex $v_{i}$, denoted by $Tr_{G}(v_{i})$, is the sum of distances from $v_{i}$ to all other vertices of $G$, i.e. $Tr_{G}(v_{i})=\sum_{u_{i}\in V(G)}d_{G}(v_{i}, u_{i})$. If $Tr_{G}(v_{i})$ is a constant for each $v_{i}\in V(G)$, then the graph $G$ is transmission regular. The transmission $\sigma(G)$ of a graph is defined to be the sum of distances between every pair of vertices of $G$, that is, $\sigma(G)=\frac{1}{2}\sum_{v_{i}\in V(G)}Tr_{G}(v_{i})$. Let $Tr(G)=diag(Tr_{G}(v_{1}), Tr_{G}(v_{2}), \ldots, Tr_{G}(v_{n}))$ be the diagonal matrix of vertex transmissions in $G$. The distance signless Laplacian matrix of $G$ is defined as $D^{Q}(G)=Tr(G)+D(G)$. The largest eigenvalues of $D(G)$ and $D^{Q}(G)$, denoted by $\lambda_{1}(D(G))$ and $\mu_{1}(D^{Q}(G))$, are called to be distance spectral radius and distance signless Laplacian spectral radius of $G$, respectively. For two vertex-disjoint graphs $G$ and $H$, the disjoint union of $G$ and $H$, denoted by $G+H$, is the graph with vertex set $V(G)\cup V(H)$ and edge set $E(G)\cup E(H)$. In particular, let $tG$ be the disjoint union of $t$ copies of graph $G$. The join graph $G\vee H$ is obtained from $G+H$ by adding all possible edges between $V(G)$ and $V(H)$. The other undefined terms and notions one can refer to \cite{Bondy2008,Brouwer2011,Godsil2001}.

Since Danish mathematicians Petersen first attempted on the study of factors in 1891, the graph factors theory play an important role in graph theory. The definition of a $(g, f)$-factor of $G$ is a spanning subgraph $F$ of $G$ satisfying  $g(v_{i})\leq d_{F}(v_{i})\leq f(v_{i})$ for any vertex $v_{i}$ in $V(G)$, where $g$ and $f$ be two integer-valued functions defined on $V(G)$ such that $0\leq g(v_{i})\leq f(v_{i})$ for each vertex $v_{i}$ in $V(G)$.  A spanning subgraph of a graph is its subgraph whose vertex set is same as the original graph. Let $a$ and $b$ be two positive integers with $a\leq b$. A $(g, f)$-factor is called an $[a, b]$-factor if $g(v_{i})\equiv a$ and $f(v_{i})\equiv b$ for any $v_{i}\in V(G)$. In special, for a positive integer $k$, an $[a, b]$-factor is called a $k$-factor if $a=b=k$. When $k=1$, $1$-factor also called a perfect matching.

Hall and Tutte gave the following influential and fundamental results in the study of factor theory. Many scholars provided Hall-type and Tutte-type characterization on this basis, which play important roles in the study of factors.

\noindent\begin{lemma}\label{le:ch-1} {\normalfont(Hall \cite{Hall 1935})} Let $G$ be a bipartite graph with bipartition $(A, B)$. Then $G$ has 1-factor if and only if $|A|=|B|$ and
$$|N(S)|\geq|S|~for~all~S\subseteq X,$$
where $|N(S)|$ is the set of all neighbours of the vertices in $S$.
\end{lemma}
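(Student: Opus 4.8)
The plan is to treat the two implications separately, with all the real work in the ``if'' direction. \textbf{Necessity} is immediate: if $M$ is a $1$-factor (perfect matching) of $G$, then $M$ matches every vertex of $A$ to a distinct vertex of $B$ and conversely, so $|A|=|B|$; and for any $S\subseteq A$ the edges of $M$ meeting $S$ pick out $|S|$ distinct vertices of $B$, all lying in $N(S)$, hence $|N(S)|\ge|S|$. The substantive content is \textbf{sufficiency}, which I would prove by induction on $|A|$ (with $|B|=|A|$ throughout).

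For the base case $|A|=1$, the unique vertex $a\in A$ has $|N(\{a\})|\ge1$, so it has a neighbour, which is the unique vertex of $B$; that single edge is the desired $1$-factor. For the inductive step I would split according to the ``deficiency'' $|N(S)|-|S|$. \emph{Case 1:} $|N(S)|\ge|S|+1$ for every nonempty proper subset $S\subsetneq A$. Choose any $a\in A$ and any neighbour $b$ of $a$, and set $G'=G-a-b$. Then $|A\setminus\{a\}|=|B\setminus\{b\}|$, and for each $S\subseteq A\setminus\{a\}$ we have $|N_{G'}(S)|\ge|N_G(S)|-1\ge|S|$; by the induction hypothesis $G'$ has a $1$-factor $M'$, and $M'\cup\{ab\}$ works. \emph{Case 2:} some nonempty proper $A_0\subsetneq A$ has $|N(A_0)|=|A_0|$. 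Apply the induction hypothesis to $G_1=G[A_0\cup N(A_0)]$, which inherits Hall's condition and has equal sides, to get a $1$-factor $M_1$; apply it again to $G_2=G-A_0-N(A_0)$ to get a $1$-factor $M_2$; then $M_1\cup M_2$ is a $1$-factor of $G$.

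The point requiring care — and the main obstacle — is checking in Case 2 that $G_2$ on the parts $A\setminus A_0$ and $B\setminus N(A_0)$ still satisfies Hall's condition. For $S\subseteq A\setminus A_0$ one verifies $N_{G_2}(S)=N_G(S\cup A_0)\setminus N(A_0)$, so that $|N_{G_2}(S)|=|N_G(S\cup A_0)|-|N(A_0)|\ge|S\cup A_0|-|A_0|=|S|$, using $|N(A_0)|=|A_0|$; the balance $|A\setminus A_0|=|B\setminus N(A_0)|$ follows from $|A|=|B|$. An equivalent and slightly cleaner route, avoiding the case split, is the augmenting-path argument: take a maximum matching $M$; if some $a_0\in A$ were unsaturated, let $S\subseteq A$ and $T\subseteq B$ be the vertices reachable from $a_0$ by $M$-alternating paths — maximality rules out an augmenting path, so $M$ restricts to a bijection $T\to S\setminus\{a_0\}$ and $N(S)=T$, giving $|N(S)|=|S|-1<|S|$, contradicting the hypothesis; hence $M$ saturates $A$, and since $|A|=|B|$ it is a $1$-factor.
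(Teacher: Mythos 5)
Your proof is correct. The paper itself gives no proof of this lemma --- it is quoted verbatim as a classical result with a citation to Hall (1935) --- so there is no in-paper argument to compare against. What you have written is the standard inductive proof of Hall's theorem: necessity is trivial, and sufficiency splits on whether every nonempty proper subset has strict surplus (delete a matched edge and induct) or some subset is tight (split the graph along the tight set and induct on both pieces). The one delicate verification, that the part of the graph outside a tight set $A_0$ still satisfies Hall's condition, is exactly the computation $|N_{G_2}(S)|=|N_G(S\cup A_0)|-|N(A_0)|\ge |S\cup A_0|-|A_0|=|S|$, and you carry it out correctly. The alternating-path argument you sketch as an alternative is also complete and correct. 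Minor remark: the statement in the paper writes ``$S\subseteq X$'' where $X$ should be $A$, and calls $|N(S)|$ a set where it means $N(S)$; you have implicitly (and correctly) repaired both typos.
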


\noindent\begin{lemma}\label{le1} {\normalfont(Tutte \cite{Tutte 1947})} A graph $G$ has a 1-factor if and only if $o(G-S)\leq|S|$ for every $S\subseteq V(G)$.
\end{lemma}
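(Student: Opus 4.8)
The plan is to prove the two implications separately; necessity is immediate, and the substance is sufficiency, which I would handle by the classical edge-maximality argument (alternatively one can derive it from Lemma~\ref{le:ch-1}). For necessity, suppose $G$ has a $1$-factor $M$ and fix $S\subseteq V(G)$. Each odd component $C$ of $G-S$ has odd order, so the edges of $M$ inside $C$ cannot saturate $C$; hence at least one vertex of $C$ is matched by $M$ to a vertex of $S$. Distinct odd components of $G-S$ are matched to distinct vertices of $S$, so $o(G-S)\le|S|$.

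For sufficiency, assume $o(G-S)\le|S|$ for every $S\subseteq V(G)$; taking $S=\emptyset$ yields $o(G)=0$, so $n=|V(G)|$ is even. Suppose for contradiction that $G$ has no $1$-factor. Adding an edge to a graph can only merge two of its components, hence never increases $o(G-S)$ for any $S$, so the Tutte condition is preserved under edge additions. Since $K_n$ has a $1$-factor, we may choose an edge-maximal spanning supergraph $G^{*}\supseteq G$ that satisfies the Tutte condition but still has no $1$-factor. The key structural claim is that $G^{*}-U$ is a disjoint union of complete graphs, where $U=\{v\in V(G^{*}):d_{G^{*}}(v)=n-1\}$. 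Granting this, a $1$-factor of $G^{*}$ is assembled as follows: match the vertices of each even clique of $G^{*}-U$ among themselves; from each odd clique send one vertex to a private vertex of $U$ (possible with distinct targets because $o(G^{*}-U)\le|U|$, and legitimate because every vertex of $U$ is adjacent to all others) and match the even remainder of the clique internally; finally pair the leftover vertices of $U$ inside the complete graph $G^{*}[U]$, which is possible since $|U|-o(G^{*}-U)$ is even (as $n\equiv|U|+o(G^{*}-U)\pmod 2$). This contradicts the choice of $G^{*}$.

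It remains to prove the structural claim, which is where the real work lies. If some component of $G^{*}-U$ is not complete, it contains vertices $x,z$ at distance $2$, with common neighbour $y$ and $xz\notin E(G^{*})$; since $y\notin U$ there is $w$ with $yw\notin E(G^{*})$, and $x,y,z,w$ are pairwise distinct. By edge-maximality, $G^{*}+xz$ and $G^{*}+yw$ have $1$-factors $M_{1}\ni xz$ and $M_{2}\ni yw$. Every component of the symmetric difference $M_{1}\triangle M_{2}$ is an isolated vertex or an even cycle whose edges alternate between $M_{1}$ and $M_{2}$, and both $xz$ and $yw$ lie on such cycles. If they lie on different cycles, flipping the $M_{1}$/$M_{2}$ roles along the cycle through $xz$ turns $M_{1}$ into a $1$-factor of $G^{*}$ using neither $xz$ nor $yw$; if they lie on the same cycle $C$, a case analysis on the cyclic order of $x,z,y,w$ along $C$, rerouting through one of the edges $xy,yz\in E(G^{*})$, again yields a $1$-factor of $G^{*}$. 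Either way we contradict the choice of $G^{*}$. I expect the same-cycle subcase to be the main obstacle: one has to splice the two alternating arcs of $C$ together by hand, using an available edge at $y$, so as to obtain a genuine $1$-factor of $G^{*}$ that avoids the non-edge $xz$.
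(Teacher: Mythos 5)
The paper offers no proof of this lemma: it is Tutte's classical theorem, quoted with a citation to \cite{Tutte 1947}, so there is nothing in the paper to compare your argument against and it must be judged on its own. What you give is the standard edge-maximality proof, and its architecture is sound: necessity via each odd component of $G-S$ sending a matched vertex into $S$; monotonicity of the Tutte condition under edge addition; the edge-maximal counterexample $G^{*}$; the set $U$ of dominating vertices; the assembly of a $1$-factor from the clique structure of $G^{*}-U$ (with the correct parity check $n\equiv|U|+o(G^{*}-U)\pmod 2$); and the symmetric-difference argument for the structural claim, whose different-cycles subcase you handle correctly. The only step you leave unexecuted is the same-cycle subcase, and since that is exactly where a parity obstruction could hide, let me confirm it closes. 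Delete $yw$ from the common cycle $C$ to obtain a Hamiltonian path $P=u_{0}u_{1}\cdots u_{2k-1}$ of $V(C)$ with $u_{0}=y$, $u_{2k-1}=w$, whose edges alternate $M_{1},M_{2},\ldots,M_{1}$ and all lie in $E(G^{*})$ except $xz=u_{2j}u_{2j+1}$; here $1\leq j\leq k-2$ because $y,w\notin\{x,z\}$. Then
$\{yu_{2j+1}\}\cup\{u_{1}u_{2},u_{3}u_{4},\ldots,u_{2j-1}u_{2j}\}\cup\{u_{2j+2}u_{2j+3},\ldots,u_{2k-2}u_{2k-1}\}$
is a perfect matching of $V(C)$ inside $G^{*}$, since $u_{2j+1}\in\{x,z\}$ is adjacent to $y$ and the remaining edges are $M_{2}$- and $M_{1}$-edges of $P$ distinct from $yw$ and $xz$; together with $M_{1}\setminus E(C)$ it is a $1$-factor of $G^{*}$, the desired contradiction (so no case analysis on cyclic order is actually needed --- one always matches $y$ to the far endpoint of $xz$ along $P$). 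Two small cautions: you should justify $xz\in M_{1}$ (if $xz\notin M_{1}$, then $M_{1}$ would already be a $1$-factor of $G^{*}$, an immediate contradiction), and the parenthetical suggestion that sufficiency can ``alternatively'' be derived from Lemma~\ref{le:ch-1} is not a routine deduction and should not be leaned on; neither point affects the correctness of the argument you actually give.
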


A graph $G$ is said to be factor-critical if for every vertex $v_{i}$ of $G$, $G-v_{i}$ has a 1-factor. If for every independent set $I$ of $G$ whose size has the same parity as $|V(G)|$, $G-I$ has a perfect matching, then the graph is independent-set-deletable factor-critical, and denoted by $ID$-factor-critical shortly.

Let $h$: $E(G)\rightarrow [0, 1]$ be a function on $E(G)$ satisfying $g(v_{i})\leq\sum _{e\in E_{G}(v_{i})}h(e)\leq f(v_{i})$ for any vertex $v_{i}\in V(G)$. Then the spanning subgraph with edge set $E_{h}$, denoted by $G[E_{h}]$, is called a fractional $(g, f)$-factor of $G$ with indicator function $h$, where $E_{h}=\{e\in E(G)\mid h(e)>0\}$ and $E_{G}(v_{i})=\{e\in E(G)\mid e$ is incident with $v_{i}$ in $G$\}. Corresponding to the concept of $[a, b]$-factor, A fractional $(g, f)$-factor is called a fractional $[a, b]$-factor if $g(v_{i})\equiv a$ and $f(v_{i})\equiv b$ for any $v_{i}\in V(G)$. In particular, a fractional $[a, b]$-factor is called a fractional $k$-factor if $a=b=k$. When $k=1$, a fractional $1$-factor also called a fractional perfect matching.

For fractional 1-factors, Tutte also obtained the following criterion.

\noindent\begin{lemma}\label{le2} {\normalfont(Tutte \cite{Tutte 1953})} Let $G$ be a simple graph. Then $G$ has a fractional 1-factor if and only if $i(G-S)\leq|S|$ for every $S\subseteq V(G)$.
\end{lemma}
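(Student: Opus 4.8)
The plan is to reduce the lemma to Hall's theorem (Lemma~\ref{le:ch-1}) by passing to a bipartite auxiliary graph, and then to translate the resulting Hall-type condition into the Tutte-type inequality $i(G-S)\le|S|$.

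First I would introduce the bipartite graph $B$ with parts $X=\{x_1,\dots,x_n\}$ and $Y=\{y_1,\dots,y_n\}$, joining $x_i$ to $y_j$ precisely when $v_iv_j\in E(G)$ (so $i\ne j$, as $G$ is simple). The first step is to show that $G$ has a fractional $1$-factor if and only if $B$ has a $1$-factor. For the forward direction, an indicator function $h$ with $\sum_{e\in E_G(v_i)}h(e)=1$ transfers to $B$ via $z(x_iy_j)=z(x_jy_i)=h(v_iv_j)$, under which every vertex of $B$ has $z$-degree exactly $1$; summing these degrees over an arbitrary $A\subseteq X$ and comparing with the degrees of the vertices of $N_B(A)$ yields $|N_B(A)|\ge|A|$, so $B$ has a $1$-factor by Lemma~\ref{le:ch-1} (note $|X|=|Y|=n$). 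For the converse, a $1$-factor of $B$ is exactly a fixed-point-free permutation $\sigma$ of $\{1,\dots,n\}$ with $v_iv_{\sigma(i)}\in E(G)$ for every $i$; decomposing $\sigma$ into cycles, I would set $h\equiv1$ on the edge of each transposition, $h\equiv\tfrac{1}{2}$ on the edges of each odd cycle, and an alternating $1,0$ pattern on each even cycle. Since the cycles of $\sigma$ are vertex-disjoint these assignments do not conflict, and each vertex receives $h$-degree $1$, so $G$ has a fractional $1$-factor.

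It then remains to prove that Hall's condition for $B$, namely $|N_G(A)|\ge|A|$ for every $A\subseteq V(G)$, is equivalent to $i(G-S)\le|S|$ for every $S\subseteq V(G)$. I would route this through the intermediate statement $(\ast)$: $|N_G(I)|\ge|I|$ for every \emph{independent} set $I$. The equivalence of $(\ast)$ with the Tutte-type inequality is routine — the isolated vertices of $G-S$ form an independent set $I$ with $N_G(I)\subseteq S$, giving $i(G-S)=|I|\le|N_G(I)|\le|S|$, and conversely one takes $S=N_G(I)$ and observes that every vertex of $I$ is then isolated in $G-S$. Hall's condition for $B$ trivially implies $(\ast)$, so the one genuine obstacle is the converse. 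Here I would take a counterexample $A$ to Hall of smallest size; minimality forces $N_G(A\setminus\{x\})=N_G(A)$ and $|N_G(A)|=|A|-1$ for every $x\in A$. Writing $A_0$ for the set of vertices of $A$ having no neighbour inside $A$ — an independent set — one computes $|N_G(A)\setminus A|=|A_0|-1$ (in particular $|A_0|\ge1$); if $|A_0|\ge2$ then $N_G(A_0)\subseteq N_G(A)\setminus A$ gives $|N_G(A_0)|<|A_0|$, contradicting $(\ast)$, while if $|A_0|=1$ its unique vertex must be isolated in $G$, again contradicting $(\ast)$. Chaining the equivalences completes the proof. A more self-contained alternative avoids $B$: the system $h\ge0$, $\sum_{e\in E_G(v_i)}h(e)=1$ is infeasible, by Farkas' lemma, exactly when some $y\in\mathbb{R}^{V(G)}$ satisfies $y_u+y_v\ge0$ on every edge and $\sum_vy_v<0$; thresholding $y$ through the level sets $S_t=\{v:y_v>t\}$ and $I_t=\{v:y_v<-t\}$ and using $\sum_vy_v=\int_0^\infty(|S_t|-|I_t|)\,dt$ produces a $t\ge0$ with $i(G-S_t)\ge|I_t|>|S_t|$. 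I expect the bipartite route to be cleaner to write out in detail, with the minimal-counterexample step being the only part that is not pure bookkeeping.
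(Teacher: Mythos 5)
The paper does not prove this lemma at all: it is quoted as a known theorem of Tutte with only a citation, so there is no in-paper argument to compare against. Judged on its own, your proof is correct and complete. The reduction to the bipartite double cover $B$ works: the transfer $z(x_iy_j)=h(v_iv_j)$ gives a fractional perfect matching of $B$, whence Hall's condition by the degree-counting you describe; conversely the cycle decomposition of the permutation $\sigma$ (with weights $1$, $\tfrac12$, or alternating $1,0$ — in fact $\tfrac12$ on every cycle of length at least $3$ would do uniformly) yields a valid indicator function because distinct cycles of $\sigma$ are vertex-disjoint and, for $k\ge 3$, the $k$ edges $v_{i_j}v_{i_{j+1}}$ are pairwise distinct. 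The translation of Hall's condition into $i(G-S)\le|S|$ via the intermediate statement $(\ast)$ is also sound; your minimal-counterexample step is the only place requiring care, and the computation $|N_G(A)\setminus A|=|A_0|-1$ together with $N_G(A_0)\subseteq N_G(A)\setminus A$ does close it (indeed the case split on $|A_0|$ is unnecessary, since $|N_G(A_0)|\le|A_0|-1$ already contradicts $(\ast)$ for any $|A_0|\ge 1$). The Farkas/thresholding alternative you sketch is also valid, since the constraint $h(e)\le 1$ is implied by $h\ge 0$ and the degree equations. Either write-up would serve as a self-contained proof of the cited result.
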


Based on the definition of fractional $[a, b]$-factor of a graph, a graph is defined as a fractional $[a, b]$-deleted graph if for any $e\in E(G)$, $G-e$ contains a fractional $[a, b]$-factor.

With the development of spectral graph theory, the study on factors of graphs \cite{Akiyama2011} attracted much attention. In particular, by using Hall-type and Tutte-type characterizations, many authors have devoted to some sufficient conditions for a graph to have factors. Very recently, Fan and Liu \cite{Fan2023} gave bounds for the adjacency spectral radius to guarantee the existence of fractional $[a, b]$-factor and $ID$-factor-critical graphs. For more researches of the relationships between adjacency spectral radius and factors of graphs, we can see \cite{Ao2023,Ao ar,Fan2022,Wei2023,Zhou2023}.

Distance spectral extremal graph theory as a new and interesting study topic have been further studied, we can see the excellent survey \cite{Aouchiche,Shu2021}. In recent years, Zhang and Lin $\cite{Zhang2021}$ considered $1$-factor and distance spectral radius in graphs and bipartite graphs. Moreover, Li and Miao $\cite{Li2022}$ obtained the sufficient condition of odd factors for graphs in terms of distance spectral radius. At the same time, Zhang and Lin $\cite{Lin2021}$ proved distance spectral radius conditions of $1$-factor in bipartite graphs with given minimum degree, and gave the extremal graph attaining the minimum distance spectral radius, which improved the result of $\cite{Zhang2021}$. Later, Miao and Li \cite{Miao2023} studied an upper bound for distance spectral radius in a connected graph to guarantee the existence of a star factor. Zhang and Dam \cite{Zhang2023} discussed the relationship between $k$-critical-factor and distance spectral radius. In this paper, we firstly give a distance spectral radius condition of $G$ to guarantee that $G$ is $ID$-factor-critical. Furthermore, we provide sufficient conditions in terms of distance spectral radius and distance signless Laplacian spectral radius conditions for a graph to contain a fractional $[a, b]$-factor, fractional $[a, b]$-deleted-factor and $k$-factor.

The rest of this paper is organized as follows. In the next section, we provide several Lemmas which will be used in our proofs. In section 3, we use the distance spectral radius to give a sufficient condition for a graph to be $ID$-factor-critical and characterize the extremal graph, we also give the corresponding proof of it. In section 4, we present the sufficient distance spectral radius and distance signless Laplacian spectral radius conditions for fractional $[a, b]$-factor, fractional $[a, b]$-deleted factor and $k$-factor of $G$.

\section{Preliminaries}
\label{sec:Preliminaries}
In the following, we introduce some important lemmas that will be used in this paper.

Let $M$ be the following $n\times n$ matrix
\[
M=\left(\begin{array}{ccccccc}
M_{1,1}&M_{1,2}&\cdots &M_{1,m}\\
M_{2,1}&M_{2,2}&\cdots &M_{2,m}\\
\vdots& \vdots& \ddots & \vdots\\
M_{m,1}&M_{m,2}&\cdots &M_{m,m}\\
\end{array}\right),
\]
whose rows and columns are partitioned into subsets $X_{1}, X_{2},\ldots ,X_{m}$ of $\{1,2,\ldots, n\}$.
The quotient matrix $R(M)$ of the matrix $M$ (with respect to the given partition)
is the $m\times m$ matrix whose entries are the
average row sums of the blocks $M_{i,j}$ of $M$.
The above partition is called equitable
if each block $M_{i,j}$ of $M$ has constant row (and column) sum. Also, we say that the quotient matrix $R(M)$ is equitable if the partition is called equitable.

\begin{lemma}\label{le4} {\normalfont(\cite{Brouwer2011})}
Let $M$ be a real symmetric matrix and $R(M)$ be an equitable quotient matrix of $M$. Then the eigenvalues of $R(M)$ are also eigenvalues of $M$. Furthermore, if $M$ is nonnegative and irreducible. Then
$$\lambda_{1}(D(G))=\lambda_{1}(R(M)).$$
\end{lemma}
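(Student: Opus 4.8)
The plan is to intertwine $M$ and $R(M)$ through the characteristic (indicator) matrix of the partition. Write the partition of $\{1,2,\ldots,n\}$ as $X_{1}\cup X_{2}\cup\cdots\cup X_{m}$, and let $S$ be the $n\times m$ matrix whose $j$-th column $s_{j}$ is the $0$-$1$ indicator vector of $X_{j}$. Because the partition is equitable, evaluating the $(i,j)$ block of $MS$ on any row belonging to $X_{i}$ returns the constant row sum of the block $M_{i,j}$, which is exactly the $(i,j)$-entry of $R(M)$; hence
\[
M S = S\,R(M).
\]
The first assertion is now immediate: if $R(M)y=\mu y$ with $y\neq 0$, then $M(Sy)=S R(M)y=\mu(Sy)$, and $Sy\neq 0$ since the columns of $S$ are nonzero with pairwise disjoint supports, so $S$ has full column rank $m$. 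Thus every eigenvalue of $R(M)$ is an eigenvalue of $M$, with an eigenvector obtained by lifting along $S$.

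For the second assertion, the first paragraph already gives $\lambda_{1}(R(M))\le\lambda_{1}(M)$, because the spectrum of $R(M)$ is contained in that of $M$. It remains to show the reverse inequality, i.e. that $\lambda_{1}(M)$ itself is an eigenvalue of $R(M)$. Let $\mathcal{U}=\mathrm{col}(S)$ be the column space of $S$; by $MS=SR(M)$ it is $M$-invariant, and since $M$ is symmetric, $\mathcal{U}^{\perp}$ is $M$-invariant as well. Hence $M$ is block-diagonal with respect to the orthogonal decomposition $\mathbb{R}^{n}=\mathcal{U}\oplus\mathcal{U}^{\perp}$, so the spectrum of $M$ (with multiplicities) is the union of the spectra of $M|_{\mathcal{U}}$ and $M|_{\mathcal{U}^{\perp}}$. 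Moreover, reading $MS=SR(M)$ column by column gives $M s_{j}=\sum_{i}R(M)_{ij}\,s_{i}$, so in the basis $\{s_{1},\ldots,s_{m}\}$ the operator $M|_{\mathcal{U}}$ is represented by $R(M)$; in particular $\mathrm{spec}(M|_{\mathcal{U}})=\mathrm{spec}(R(M))$.

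Now invoke Perron--Frobenius: since $M$ is nonnegative and irreducible, $\lambda_{1}(M)$ is a simple eigenvalue with a strictly positive eigenvector $x$. Decompose $x=u+w$ with $u\in\mathcal{U}$ and $w\in\mathcal{U}^{\perp}$; applying $M$ and matching components in the invariant subspaces yields $Mu=\lambda_{1}(M)u$ and $Mw=\lambda_{1}(M)w$. If $\lambda_{1}(M)$ were not an eigenvalue of $M|_{\mathcal{U}}=R(M)$, then $u=0$, so $x=w\in\mathcal{U}^{\perp}$. But $\mathbf{1}=s_{1}+\cdots+s_{m}\in\mathcal{U}$, so $x\in\mathcal{U}^{\perp}$ forces $x^{\top}\mathbf{1}=\sum_{i}x_{i}=0$, contradicting $x>0$. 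Therefore $\lambda_{1}(M)\in\mathrm{spec}(R(M))$, and combined with $\lambda_{1}(R(M))\le\lambda_{1}(M)$ this gives $\lambda_{1}(M)=\lambda_{1}(R(M))$ (which, in the applications of this paper where $M$ is assembled from $D(G)$, reads $\lambda_{1}(D(G))=\lambda_{1}(R(M))$).

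The only delicate point is the last step: ruling out that the Perron eigenvector lies entirely in $\mathcal{U}^{\perp}$. This is precisely where the containment $\mathbf{1}\in\mathcal{U}$ — a structural feature of partition indicator matrices that would fail for an arbitrary invariant subspace — is essential, and irreducibility (hence a \emph{strictly} positive Perron vector) is used to conclude the contradiction. Everything else is a routine verification of the intertwining identity and a standard invariant-subspace decomposition.
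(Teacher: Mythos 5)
Your proof is correct and complete. The paper itself gives no proof of this lemma --- it is quoted from Brouwer and Haemers \cite{Brouwer2011} --- and your argument is exactly the standard one: the intertwining relation $MS=SR(M)$ for the partition indicator matrix $S$ yields the spectral containment, and the Perron--Frobenius step is closed correctly by observing that $\mathbf{1}\in\mathrm{col}(S)$, so a strictly positive Perron vector cannot lie in $\mathrm{col}(S)^{\perp}$. You also rightly note that the conclusion should read $\lambda_{1}(M)=\lambda_{1}(R(M))$, the paper's ``$\lambda_{1}(D(G))$'' being shorthand for the case $M=D(G)$ used in its applications.
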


\begin{lemma}\label{le5} {\normalfont(\cite{Lin2021})}
Let $e$ be an edge of a graph $G$ such that $G-e$ is connected. Then
$$\lambda_{1}(D(G))<\lambda_{1}(D(G-e)).$$
\end{lemma}

\begin{lemma}\label{le6} {\normalfont(\cite{Lin2021})}
Let $p\geq2$ and $n_{i}\geq1$ for $i=1, 2, \ldots, p$. If $\sum_{i=1}^{p}n_{i}=n-s$ where $s\geq1$, then
$$\lambda_{1}(D(K_{s}+(K_{n_{1}}+K_{n_{2}}+\cdots+K_{n_{p}})))\geq\lambda_{1}(D(K_{s}\vee(K_{n-s-p+1}+I_{p-1}))).$$
\end{lemma}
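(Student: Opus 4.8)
The plan is to read the outer ``$+$'' in the statement as the join $\vee$ (otherwise the displayed graph is disconnected and has no distance matrix), and then to prove the inequality by a chain of local moves. Write $G_{(a_1,\dots,a_p)} := K_s \vee (K_{a_1} + \cdots + K_{a_p})$ for the inner clique-order multiset $\{a_1,\dots,a_p\}$; after relabelling assume $n_1 \ge n_2 \ge \cdots \ge n_p \ge 1$, and note that the right-hand side graph is $G_{(n-s-p+1,\,1,\dots,1)}$. I would use the \emph{vertex shift}: if the current configuration is not yet $(n-s-p+1,1,\dots,1)$, then $n_q \ge 2$ for some $q \ge 2$ (otherwise the other $p-1$ inner cliques would all be singletons and the largest would already have order $n-s-p+1$); delete a vertex $w$ from $K_{n_q}$ and re-insert it, with all $n_1$ new edges, into the largest inner clique $K_{n_1}$, obtaining $G'$ with inner multiset $\{n_1+1,\, n_2,\dots,n_q-1,\dots,n_p\}$. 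Each shift strictly increases the order of the largest inner clique while keeping both $p$ and $n-s$ fixed, so finitely many shifts terminate at $G_{(n-s-p+1,1,\dots,1)}$; hence it suffices to show $\lambda_1(D(G)) \ge \lambda_1(D(G'))$ for one shift, where $G = G_{(n_1,\dots,n_p)}$.

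Checking the few distance types shows that the shift changes distances only in the row and column of $w$: its distance to each of the $n_q-1$ vertices remaining in $K_{n_q}$ increases from $1$ to $2$, its distance to each of the $n_1$ original vertices of $K_{n_1}$ decreases from $2$ to $1$, and no other distance (nor any diagonal entry) changes. Thus
$$D(G') = D(G) + wv^{T} + vw^{T}, \qquad v = \mathbf{1}_{A} - \mathbf{1}_{B},$$
where $w$ also denotes the characteristic vector of the vertex $w$, $A$ is the set of $n_q-1$ vertices left in $K_{n_q}$ after the move, and $B$ is the set of the $n_1$ original vertices of $K_{n_1}$. Let $x>0$ be the unit Perron eigenvector of $D(G')$ and set $\lambda' = \lambda_1(D(G'))$. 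By the Rayleigh principle applied to $D(G)$,
$$\lambda_1(D(G)) \ge x^{T} D(G)\, x = \lambda' - 2\,(x^{T}w)(x^{T}v) = \lambda' - 2\,x_w\,(x^{T}v),$$
so the whole proof reduces to showing $x^{T}v \le 0$.

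By the symmetry of $G'$ the Perron vector $x$ is constant on $V(K_s)$ (value $z>0$) and on each inner clique $K_{m_i}$ (value $y_i>0$), where $m_i$ denotes its order. The eigenvalue equation at an inner vertex gives $(\lambda'+m_i+1)\,y_i = sz + 2\sum_j m_j y_j =: C$, with $C>0$, hence $y_i = C/(\lambda'+m_i+1)$. In $G'$ the largest inner clique has order $n_1+1$ and the clique containing $A$ has order $n_q-1$, so
$$x^{T}v = (n_q-1)\,y_{q} - n_1\,y_{1} = C\!\left(\frac{n_q-1}{\lambda'+n_q} - \frac{n_1}{\lambda'+n_1+2}\right),$$
which is $\le 0$ exactly when $(n_1+1-n_q)\,\lambda' \ge 2n_q-2-n_1$. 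This is clear when $2n_q-2-n_1\le 0$; otherwise $n_1+1-n_q \ge 1$ and $2n_q-2-n_1 < n_q-2 \le n-s-2 \le n-3$, while $\lambda' = \lambda_1(D(G')) \ge n-1$ because $D(G') \ge J-I$ entrywise. So $x^{T}v \le 0$, giving $\lambda_1(D(G)) \ge \lambda_1(D(G'))$, and iterating the shift finishes the proof. I expect the main obstacle to be exactly this last paragraph — squeezing the closed form $y_i = C/(\lambda'+m_i+1)$ out of the eigenvalue equations and combining it with the bound $\lambda'\ge n-1$ to pin down the sign of $x^{T}v$ — together with the careful bookkeeping needed in the previous paragraph to establish the exact rank-two identity for $D(G')-D(G)$.
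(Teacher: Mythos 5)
The paper states this lemma as a citation to \cite{Lin2021} and gives no proof of its own, so there is nothing internal to compare against; judged on its merits, your argument is correct, and your reading of the outer ``$+$'' as the join $\vee$ is indeed the intended correction of a typo. The vertex-shift reduction, the rank-two identity $D(G')-D(G)=wv^{\top}+vw^{\top}$, the closed form $y_i=C/(\lambda'+m_i+1)$ from the equitable structure, and the bound $\lambda'\ge n-1$ all check out, and this is essentially the same local-perturbation/Rayleigh-quotient strategy used in the cited source.
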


\noindent\begin{lemma}\label{le3} {\normalfont(\cite{Fan2023})} A graph $G$ is $ID$-factor-critical if and only if $o(G-I-S)\leq|S|$ for every independent set $I$ such that $|I|$ has the same parity as $|V(G)|$ and every subset $S\subseteq V(G)-I$.
\end{lemma}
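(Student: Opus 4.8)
The plan is to reduce the statement directly to Tutte's $1$-factor theorem (Lemma~\ref{le1}), applied separately to each vertex-deleted subgraph of the form $G-I$, where $I$ ranges over the independent sets whose size has the same parity as $|V(G)|$. The point is that being $ID$-factor-critical is, by definition, the conjunction over all such $I$ of the statement ``$G-I$ has a perfect matching,'' and Tutte's criterion rephrases each of these statements as a condition on odd components of $(G-I)-S$. Since $(G-I)-S = G-I-S$ as graphs, this should match the claimed condition verbatim.

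For the forward implication I would assume $G$ is $ID$-factor-critical, fix an independent set $I$ with $|I|\equiv|V(G)|\pmod 2$ and an arbitrary $S\subseteq V(G)-I$. By definition $G-I$ has a $1$-factor, so applying Lemma~\ref{le1} to the graph $G-I$ and the vertex subset $S\subseteq V(G-I)$ yields $o\big((G-I)-S\big)\le |S|$, that is, $o(G-I-S)\le|S|$; since $I$ and $S$ were arbitrary subject to the stated restrictions, this is exactly the asserted inequality. For the converse I would assume the inequality $o(G-I-S)\le|S|$ holds for every admissible $I$ and every $S\subseteq V(G)-I$, fix one such $I$, and observe that then $o\big((G-I)-S\big)\le|S|$ for all $S\subseteq V(G-I)$; Lemma~\ref{le1} gives that $G-I$ has a $1$-factor, i.e.\ a perfect matching. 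As this holds for every independent set $I$ of the prescribed parity, $G$ is $ID$-factor-critical by definition.

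There is no serious obstacle here; the only subtlety worth checking is the parity bookkeeping. A perfect matching of $G-I$ can only exist when $|V(G)-I| = n-|I|$ is even, which is precisely what the parity hypothesis on $|I|$ guarantees; and one should note that this evenness is in any case already forced by the inequality itself, by taking $S=\emptyset$: $o(G-I)\le 0$ means every component of $G-I$ has even order, so $|V(G-I)|$ is even. Hence no separate case analysis is required, and the proof is nothing more than two invocations of Tutte's theorem together with this parity remark.
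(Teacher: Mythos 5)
Your proof is correct: the lemma is precisely the conjunction, over all admissible independent sets $I$, of Tutte's criterion (Lemma~\ref{le1}) applied to $G-I$, and your parity remark (that $|V(G)-I|$ even is both guaranteed by the hypothesis on $|I|$ and forced by the inequality with $S=\emptyset$) closes the only possible gap. The paper itself states this lemma without proof, citing \cite{Fan2023}; your argument is the standard one and matches what that reduction requires.
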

\section{The distance spectral radius conditions for $ID$-factor-critical of a graph}

\begin{theorem}\label{th1}
Let $G$ be a connected graph of order $n\geq 7r+4$ with $r\geq1$. If
$$\lambda_{1}(G)\leq\lambda_{1}(I_{r}\vee K_{r}\vee(K_{n-3r-1}+I_{r+1})),$$
then $G$ is $ID$-factor-critical unless $G\cong I_{r}\vee K_{r}\vee(K_{n-3r-1}+I_{r+1})$.
\end{theorem}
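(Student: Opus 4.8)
\ \ The plan is to argue by contradiction, combining the Tutte-type characterization of Lemma \ref{le3} with the fact that the distance spectral radius strictly decreases when edges are added. Assume $G$ is not $ID$-factor-critical and $G\not\cong I_{r}\vee K_{r}\vee(K_{n-3r-1}+I_{r+1})$. By Lemma \ref{le3} there exist an independent set $I$ with $|I|\equiv n\pmod 2$ and a set $S\subseteq V(G)-I$ with $o(G-I-S)\ge|S|+1$. Since the number of components of any graph is congruent to its order modulo $2$, and $|V(G-I-S)|=n-|I|-|S|\equiv-|S|\pmod 2$, the quantity $o(G-I-S)$ has the same parity as $|S|$; hence in fact $o(G-I-S)\ge|S|+2$. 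Write $t=|I|$ and $s=|S|$.

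Next I would pass to an edge-maximal graph carrying the same obstruction. By Lemma \ref{le5}, $\lambda_{1}(G)\ge\lambda_{1}(G')$, where $G'$ is obtained from $G$ by making $S$ a clique joined to all other vertices, making $I$ an independent set joined to every vertex outside $I$, and completing each component of $G-I-S$ to a clique (no edges are added between distinct components, so the $\ge s+2$ odd components survive). Adding still more edges to merge every even component, and every odd component beyond the first $s+2$, into one fixed odd component, we reach $G''=I_{t}\vee K_{s}\vee(K_{n_{1}}+\cdots+K_{n_{s+2}})$ with all $n_{i}$ odd and $\sum_{i}n_{i}=n-t-s$, and $\lambda_{1}(G)\ge\lambda_{1}(G'')$. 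Finally, applying Lemma \ref{le6} to the subgraph $K_{s}\vee(K_{n_{1}}+\cdots+K_{n_{s+2}})$ and rejoining $I_{t}$ (which preserves the inequality) collapses all but one of these cliques to a single vertex, giving
$$\lambda_{1}(G)\ \ge\ \lambda_{1}(G_{t,s}),\qquad G_{t,s}:=I_{t}\vee K_{s}\vee\bigl(K_{n-t-2s-1}+I_{s+1}\bigr).$$
Here $n-t-2s-1$ is odd because $t\equiv n\pmod 2$, so $G_{t,s}$ itself is not $ID$-factor-critical (take $I=I_{t}$, $S=K_{s}$); and $n\ge7r+4$ keeps $n-t-2s-1\ge1$ over the range of $(t,s)$ that can occur.

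It remains to minimize $\lambda_{1}(G_{t,s})$ over the admissible pairs $(t,s)$ and to show the minimum is attained only at $(t,s)=(r,r)$, i.e.\ only at $I_{r}\vee K_{r}\vee(K_{n-3r-1}+I_{r+1})$. The partition of $V(G_{t,s})$ into $I_{t},K_{s},K_{n-t-2s-1},I_{s+1}$ is equitable, so by Lemma \ref{le4} $\lambda_{1}(G_{t,s})$ is the largest eigenvalue of the corresponding $4\times4$ quotient matrix $R(t,s)$, whose entries are linear in $n,t,s$. The decisive and hardest step is then to compare these Perron roots: writing $f_{t,s}(x)=\det(xI-R(t,s))$, one sets $x_{0}=\lambda_{1}(I_{r}\vee K_{r}\vee(K_{n-3r-1}+I_{r+1}))$, checks that $x_{0}$ lies above the three non-Perron eigenvalues of each $R(t,s)$, and shows that $f_{t,s}(x_{0})<0$ for every admissible $(t,s)\ne(r,r)$, forcing $\lambda_{1}(G_{t,s})>x_{0}$. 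This reduces to an elementary but delicate inequality between determinants of $4\times4$ matrices — together with a check of the degenerate sub-cases ($s=0$, or $t$ at the boundary of its range, where the quotient matrix has smaller size) — and it is precisely here that the quantitative hypothesis $n\ge7r+4$ is used. Granting this, $\lambda_{1}(G)\ge\lambda_{1}(G_{t,s})\ge\lambda_{1}(I_{r}\vee K_{r}\vee(K_{n-3r-1}+I_{r+1}))$ with equality throughout only when $G\cong I_{r}\vee K_{r}\vee(K_{n-3r-1}+I_{r+1})$, contradicting the hypothesis $\lambda_{1}(G)\le\lambda_{1}(I_{r}\vee K_{r}\vee(K_{n-3r-1}+I_{r+1}))$. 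This contradiction completes the proof.
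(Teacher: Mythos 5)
Your reduction is essentially the paper's: Lemma \ref{le3} plus the parity argument gives $o(G-I-S)\ge |S|+2$ (note, though, that it is the number of \emph{odd} components, not the number of components, that is congruent to the order modulo $2$), Lemma \ref{le5} lets you add edges, and Lemma \ref{le6} collapses the odd components, so everything comes down to comparing $\lambda_{1}(D(G_{t,s}))$ with $\lambda_{1}^{*}=\lambda_{1}(D(I_{r}\vee K_{r}\vee(K_{n-3r-1}+I_{r+1})))$. The genuine gap is that this comparison --- the only place where the hypothesis $n\ge 7r+4$ does any work, and the entire quantitative content of the theorem --- is not carried out: you reduce it to ``show $f_{t,s}(x_{0})<0$ for every admissible $(t,s)\ne(r,r)$'' and then write ``Granting this''. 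Nothing in the proposal establishes that inequality, nor the auxiliary fact that $x_{0}$ lies above the three non-Perron eigenvalues of each $R(t,s)$ (which your sign argument requires), so the proof is missing precisely its decisive step. The difficulty is compounded because you (correctly) let both $t=|I|$ and $s=|S|$ vary, which leaves a two-parameter family of quartic characteristic polynomials to control; the degenerate subcases you mention ($s=0$, boundary values of $t$, and $n-t-2s-1$ collapsing to the minimum) are likewise only named, not handled.

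For comparison, the paper executes this step by a different and more concrete device (though it only treats $t=r$, silently identifying $|I|$ with the parameter $r$ of the statement): it takes the Perron vector $\mathbf{x}$ of $D(G^{(r)})$, uses the Rayleigh-quotient bound $\lambda_{1}(D(G^{(s)}))-\lambda_{1}^{*}\ge \mathbf{x}^{\top}\bigl(D(G^{(s)})-D(G^{(r)})\bigr)\mathbf{x}$, computes the right-hand side explicitly from the block structure of the difference of the two distance matrices as $(s-r)c\,[(2n-3s-3r-3)c-2(r+1)d]$, eliminates $d$ via the eigen-equations of the equitable quotient matrix, and closes with the crude estimate $\lambda_{1}^{*}>n-1$ coming from the minimum row sum, plus a separate check at $n=7r+4$. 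To complete your argument you must either actually perform the determinant estimates you allude to, or switch to this perturbation technique; in either case the minimization over all admissible $(t,s)$ still has to be written out.
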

\begin{proof}
Assume that a graph $G$ is not $ID$-factor-critical. According to Lemma \ref{le3}, there exists an independent set $I$ whose size has the same parity as $|V(G)|=n$, for a subset $S\subseteq V(G)-I$, such that $o(G-I-S)\geq|S|+1$. We denote $|I|=r$ and $|S|=s$, then $o(G-I-S)\geq s+1$. Note that $n$ and $r$ has the same parity, so $n-r$ is even, and so $o(G-I-S)$ and $s$ have the same parity. Therefore $o(G-I-S)\geq s+2$. We can get that $G$ is a spanning subgraph of
$$I_{r}\vee K_{s}\vee(K_{n_{1}}+K_{n_{2}}+\cdots+K_{n_{s+2}})$$ for all odd integers $n_{1}\geq n_{2}\geq\cdots\geq n_{s+2}>0$ with $\sum_{i=1}^{s+2}n_{i}=n-r-s$. Then by Lemma \ref{le5}, we have
$$\lambda_{1}(D(G))\geq\lambda_{1}(D(I_{r}\vee K_{s}\vee(K_{n_{1}}+K_{n_{2}}+\cdots+K_{n_{s+2}}))),$$
where equality holds if and only if $G=I_{r}\vee K_{s}\vee(K_{n_{1}}+K_{n_{2}}+\cdots+K_{n_{s+2}})$. We denote
$$G^{(s)}=I_{r}\vee K_{s}\vee(K_{n-2s-r-1}+I_{s+1}).$$
By Lemma \ref{le6}, it is easily get that
$$\lambda_{1}(D(G))\geq\lambda_{1}(D(G^{(s)})),$$
where equality holds if and only if $G=G^{(s)}$. Note that $n=s+r+\sum_{i=1}^{s+2}n_{i}\geq2s+r+2\geq 7r+4$. By the condition of Theorem \ref{th1}, we know that $G^{(r)}=I_{r}\vee K_{r}\vee(K_{n-3r-1}+I_{r+1})$ is extremal graph. We use $\lambda_{1}^{*}$ to denote its distance spectral radius of $G^{(r)}$. The main idea of the following is to show that $\lambda_{1}(D(G^{(s)})>\lambda_{1}^{*}$ when $n\geq2s+r+2$ and $s\geq 3r+1$. Let $\textbf{x}$ be the Perron eigenvector of $D(G^{(r)})$, hence $D(G^{(r)})\textbf{x}=\lambda_{1}^{*}\textbf{x}$. It is well-known that $\textbf{x}$ is constant on each part corresponding to an equitable partition. Thus we get
$$\textbf{x}=(\underbrace{a, \ldots, a}_{r}, \underbrace{b, \ldots, b}_{r}, \underbrace{c, \ldots, c}_{n-3r-1}, \underbrace{d, \ldots, d}_{r+1})^{\top},$$
where $a, b, c \in\mathbb{R}^{+}$. In order to compare the appropriate spectral radius, we refine the partition, and write $\textbf{x}$ as follows:
$$\textbf{x}=(\underbrace{a,\ldots,a}_{r}, \underbrace{b,\ldots,b}_{r}, \underbrace{c,\ldots,c}_{s-r}, \underbrace{c,\ldots,c}_{n-2s-r-1}, \underbrace{c,\ldots,c}_{s-r}, \underbrace{d,\ldots,d}_{r+1})^{\top}.$$
Hence, $D(G^{(s)})-D(G^{(r)})$ is partitioned as
$$\bordermatrix{%
& r & r & s-r & n-2s-r-1 & s-r & r+1\cr
\qquad\qquad\quad~~ r & \textbf{0} & \textbf{0} & \textbf{0} & \textbf{0} & \textbf{0} & \textbf{0} \cr
\qquad\qquad\quad~~r & \textbf{0} & \textbf{0} & \textbf{0} & \textbf{0} & \textbf{0} & \textbf{0} \cr
\qquad\qquad s-r & \textbf{0} & \textbf{0} & \textbf{0} & \textbf{0} & \textbf{0} & -\textbf{J} \cr
n-2s-r-1 & \textbf{0} & \textbf{0} & \textbf{0} & \textbf{0} & \textbf{J} & \textbf{0} \cr
\qquad\qquad s-r & \textbf{0} & \textbf{0} & \textbf{0} & \textbf{J} & \textbf{J-I} & \textbf{0} \cr
\qquad\qquad r+1 & \textbf{0} & \textbf{0} & -\textbf{J} & \textbf{0} & \textbf{0} & \textbf{0} },$$
where $\textbf{J}$ is an all-ones matrix of appropriate size and $\textbf{I}$ is an identity matrix. Then we have
\begin{align*}
\lambda_{1}(D(G^{(s)}))-\lambda_{1}^{*}\geq&\textbf{x}^{\top}(D(G^{(s)})-D(G^{(r)}))\textbf{x}\\
=&(s-r)c[(2n-3s-3r-3)c-2(r+1)d].
\end{align*}
Note that $s\geq 3r+1$ and $c>0$, hence it suffices to show that
\begin{align}\label{eqn-1}
(2n-3s-3r-3)c-2(r+1)d>0.
\end{align}
From the vertex partition of $D(G^{(r)})$, we obtain the quotient matrix
$$R(D(G^{(r)}))=\left(\begin{array}{cccc}
  2(r-1) & r & n-3r-1 & r+1 \\
  r & r-1 & n-3r-1 & r+1 \\
  r & r & n-3r-2 & 2(r+1) \\
  r & r & 2(n-3r-1) & 2r
\end{array}\right).$$
By Lemma \ref{le4}, we know that $R(D(G^{(r)}))\textbf{x}=\lambda_{1}^{*}\textbf{x}$, that is,
\begin{subequations}\label{eqn-2}
\begin{numcases}{}%
\lambda_{1}^{*}a=2(r-1)a+rb+(n-3r-1)c+(r+1)d \notag\\
\lambda_{1}^{*}b=ra+(r-1)b+(n-3r-1)c+(r+1)d \notag\\
\lambda_{1}^{*}c=ra+rb+(n-3r-2)c+2(r+1)d \label{eqn-2-1}\\
\lambda_{1}^{*}d=ra+rb+2(n-3r-1)c+2rd \label{eqn-2-2}
\end{numcases}
\end{subequations}
From $(2a)-(2b)$, we can get
\begin{align}
d=\frac{\lambda_{1}^{*}+n-3r}{\lambda_{1}^{*}+2}c.
\end{align}
By substituting $(3)$ and $s\leq\frac{n-r-2}{2}$ into the left side of inequality $(1)$, it suffices to prove that
\begin{align}
\lambda_{1}^{*}>4r+2+\frac{(4r+3)^{2}-1}{n-7r-4}~~(n>7r+4).
\end{align}
Due to the distance spectral radius must larger than the $i$-th row sum of a distance matrix $D(G)$, that is, $\lambda_{1}^{*}>\underset{i}{min}~r_{i}(D(G^{(r)}))=n-1$. It is not difficult to verify that $$n-1>4r+2+\frac{(4r+3)^{2}-1}{n-7r-4}$$
for $n>7r+4$. Therefore, the inequality $(4)$ holds unless $n=7r+4$. In the following, we only need to proof the case $n=7r+4$. If $n=7r+4$ and $n=2s+r+2$, we have $G^{(s)}=I_{r}\vee K_{3r+1}\vee I_{3r+3}$ and $G^{(r)}=I_{r}\vee K_{r}\vee(K_{4r+3}+I_{r+1})$. From the way of the edges connection, by Lemma \ref{le6}, we can directly get that $\lambda_{1}(G^{(s)})>\lambda_{1}^{*}$. Then the proof is finished.
\end{proof}

\section{Sufficient conditions for fractional $[a, b]$-factor of graphs and its complement graphs}
\begin{lemma}\label{le7} {\normalfont(\cite{Fan2023})}
Let $a$ and $b$ be two positive integers with $a\leq b$, and let $G$ be a graph of order $n\geq a+1$ and minimum degree $\delta\geq a$. If
$$e(G)\geq {n-1\choose 2}+\frac{a+1}{2}$$
and $na\equiv 0~(mod~2)$ when $a=b$, then $G$ has a fractional $[a, b]$-factor.
\end{lemma}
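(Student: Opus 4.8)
The plan is to argue via the well-known Tutte-type characterization of fractional $[a,b]$-factors: $G$ has a fractional $[a,b]$-factor if and only if, for every $S\subseteq V(G)$,
$$a\,|T_S|-\sum_{x\in T_S}d_{G-S}(x)\ \le\ b\,|S|,\qquad\text{where}\ \ T_S=\{x\in V(G)\setminus S:\ d_{G-S}(x)\le a-1\}$$
(for $a=b=1$ this is exactly Lemma~\ref{le2}). Suppose for contradiction that $G$ has no fractional $[a,b]$-factor, and fix a set $S$ violating this inequality; write $s=|S|$, $t=|T_S|$, $W=V(G)\setminus(S\cup T_S)$ and $\theta=\sum_{x\in T_S}d_{G-S}(x)$, so that $at-\theta\ge bs+1$. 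If $T_S=\varnothing$ this reads $0\ge bs+1$, impossible, so $t\ge1$. Since $\delta\ge a$, every $x\in T_S$ has $d_G(x)\ge a$ while $d_{G-S}(x)\le a-1$, hence it sends at least $a-d_{G-S}(x)\ge1$ edges into $S$; thus $s\ge1$, and summing over $T_S$ gives $e(S,T_S)\ge at-\theta\ge bs+1$. Combined with $e(S,T_S)\le st$ this yields $t\ge b+1$, and $\theta\ge0$ forces $at\ge bs+1$, so in particular $t\ge s+1$.

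The next step is an edge count. Splitting $E(G)$ over the parts $S$, $T_S$, $W$ and using $e(G[S])\le\binom{s}{2}$, $e(S,V(G)\setminus S)\le s(n-s)$, $e(G[W])\le\binom{n-s-t}{2}$, together with $e(G[T_S])+e(T_S,W)\le 2e(G[T_S])+e(T_S,W)=\theta\le at-bs-1$ and the identity $\binom{s}{2}+s(n-s)+\binom{n-s-t}{2}=\binom{n}{2}-t(n-s-t)-\binom{t}{2}$, one obtains
$$e(G)\ \le\ \binom{n}{2}-t(n-s-t)-\binom{t}{2}+at-bs-1.$$
Since $\binom{n-1}{2}+\tfrac{a+1}{2}=\binom{n}{2}-(n-1)+\tfrac{a+1}{2}$, it then suffices to prove
$$\varphi(s,t):=t(n-s-t)+\binom{t}{2}-at+bs+1\ >\ (n-1)-\tfrac{a+1}{2},$$
which contradicts the hypothesis on $e(G)$. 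As $t\ge b+1$, we have $\partial\varphi/\partial s=b-t<0$, so $\varphi$ strictly decreases in $s$; hence it is enough to check $\varphi$ at the largest admissible $s$, namely $s=\min\{\,t-1,\ \lfloor(at-1)/b\rfloor,\ n-t\,\}$. In each resulting regime $\varphi$ becomes an upward parabola in the single variable $t$ over the range $b+1\le t\le n-1$ (further narrowed, when $W=\varnothing$, by $t\ge\lceil(bn+1)/(a+b)\rceil$ from $\theta\ge0$), and I expect the inequality to follow from $n\ge a+1$ and $b\ge a$ alone.

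The hard part will be the case $a=b$, which is also the only place where the hypothesis $na\equiv0\pmod2$ is used: without it the edge count can be exactly tight, e.g.\ $P_3$ has $a=b=1$, $\delta\ge1$ and $e(P_3)=2=\binom{n-1}{2}+\tfrac{a+1}{2}$ yet no fractional perfect matching. When $a=b$ and $W=\varnothing$ one has $s=n-t$, so $\theta=2e(G[T_S])$ is even, while $\theta\le at-bs-1=2at-an-1$; if $na$ is even then $2at-an-1$ is odd, so in fact $\theta\le 2at-an-2$, i.e.\ $e(G[T_S])\le at-\tfrac{an}{2}-1$, one edge below the crude estimate. This forces $2at\ge an+2$, hence $t\ge\tfrac n2+\tfrac1a$; for $n$ even this gives $t\ge\tfrac n2+1$, and plugging it into $\binom{t}{2}$ makes $\varphi(n-t,t)>(n-1)-\tfrac{a+1}{2}$ for every $n\ge a+1$, the odd case following similarly from the improved edge bound. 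The finitely many remaining small-order configurations are handled directly, since $\delta\ge a$ with $n$ close to $a+1$ forces $G$ to be $K_n$ (which has a fractional $[a,b]$-factor once $n\ge a+1$) or $K_n$ minus a tiny matching. In short, the whole difficulty is bookkeeping: organising the case split so that the parity hypothesis is invoked exactly when, and only when, the edge estimate degenerates to equality.
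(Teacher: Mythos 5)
First, a remark on the comparison itself: the paper does not prove this statement — Lemma~\ref{le7} is quoted from \cite{Fan2023} without proof — so there is no in-paper argument to measure yours against. Judged on its own terms, your framework is the standard (and correct) route for results of this type: the Liu--Zhang criterion $a|T_S|-\sum_{x\in T_S}d_{G-S}(x)\le b|S|$ is the right characterization, the deductions $s\ge1$, $t\ge b+1$, $t\ge s+1$ from $\delta\ge a$ are sound, the edge decomposition together with the identity $\binom{s}{2}+s(n-s)+\binom{n-s-t}{2}=\binom{n}{2}-t(n-s-t)-\binom{t}{2}$ checks out, and you correctly locate both the role of the parity hypothesis and the extremal example $P_3$.

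The genuine gap is that the heart of the proof — establishing $\varphi(s,t)>(n-1)-\tfrac{a+1}{2}$ over the entire admissible region — is never carried out; you write that you ``expect the inequality to follow,'' and that is precisely the quantitative content of the lemma. Two concrete issues show this is not a routine verification. First, your structural claim that $\varphi$ at the extremal value of $s$ is an ``upward parabola in $t$'' is false in two of the three regimes: substituting $s=t-1$ gives $\varphi=-\tfrac32t^2+(n+\tfrac12+b-a)t-b+1$, and substituting $s=(at-1)/b$ gives leading coefficient $-(\tfrac12+\tfrac ab)$; both open downward. (A concave quadratic is still minimized at the endpoints of the $t$-range, so the strategy survives, but those endpoints must then be identified and checked, which you have not done.) Second, the crude bound is tight or insufficient even inside the hypotheses of the lemma, not only in the $P_3$-type situation: for $a=b=1$, $n=4$, $s=1$, $t=3$ one gets $\varphi=2=(n-1)-\tfrac{a+1}{2}$, and for $a=b=2$, $n=5$, $s=2$, $t=3$ one gets $\varphi=2<\tfrac52$, even though $na$ is even in both cases; so the integrality/parity refinement of $\theta$ is indispensable throughout the $a=b$, $W=\varnothing$ branch, the $n$ odd, $a$ even sub-branch is dismissed with ``following similarly,'' and the claim that elsewhere everything ``follows from $n\ge a+1$ and $b\ge a$ alone'' is unverified. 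Until the optimization over $(s,t)$ and these near-equality cases are written out, the proposal is a credible plan rather than a proof.
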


\begin{lemma}\label{le8} {\normalfont(\cite{Zhou2023})}
Let $a$ and $b$ be two positive integers with $b\geq \{a, 3\}$, and let $G$ be a graph of order $n$ with $n\geq \{a+2, 7\}$. If $\delta(G)\geq a+1$ and
$$e(G)\geq {n-1\choose 2}+\frac{a+2}{2},$$
then $G$ is a fractional $[a, b]$-deleted graph.
\end{lemma}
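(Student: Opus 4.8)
\smallskip\noindent\textit{A possible line of proof.} The plan is to argue by contradiction, combining the Tutte-type criterion for fractional $[a,b]$-factors with an edge-counting estimate, much as in the proof of Lemma~\ref{le7}. (For odd $a$ with $a<b$ one can even apply Lemma~\ref{le7} directly to each $G-e$, since then $e(G-e)=e(G)-1\ge\binom{n-1}{2}+\tfrac{a+1}{2}$ and $\delta(G-e)\ge a$; so the content of the lemma lies in the remaining cases, which are handled directly.) Suppose $G$ is not a fractional $[a,b]$-deleted graph, so that $G-e$ has no fractional $[a,b]$-factor for some $e\in E(G)$. Applying the $[a,b]$-analogue of Lemma~\ref{le2} to $G-e$, and absorbing the effect of deleting $e$ (which lowers only two vertex degrees, each by just $1$), one obtains a set $S\subseteq V(G)$ such that, with $T:=\{x\in V(G)\setminus S\mid d_{G-S}(x)\le a-1\}$, one has $T\ne\emptyset$ and
$$\sum_{x\in T}\bigl(a-d_{G-S}(x)\bigr)\ \ge\ b|S|-1.$$

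First I would pin down the sizes of $S$ and $T$. Since $\delta(G)\ge a+1$, no vertex has degree at most $a-1$, so $S\ne\emptyset$; and every $x\in T$ satisfies $d_{G-S}(x)\ge d_G(x)-|S|\ge a+1-|S|$, which together with $d_{G-S}(x)\le a-1$ forces $|S|\ge 2$. Then each summand above is at most $a-(a+1-|S|)=|S|-1$, so $|T|(|S|-1)\ge b|S|-1$ and hence $|T|\ge b+\tfrac{b-1}{|S|-1}>b$, i.e.\ $|T|\ge b+1\ge\max\{a+1,4\}$ (using $b\ge\max\{a,3\}$).

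Next I would combine these with the density hypothesis, the key point being to use the defect inequality \emph{inside} the degree sum over $T$ rather than the cruder per-vertex bound $d_G(x)\le a+|S|-1$. Since $\sum_{x\in T}d_{G-S}(x)=a|T|-\sum_{x\in T}(a-d_{G-S}(x))\le a|T|-b|S|+1$, and $d_G(x)\le d_{G-S}(x)+|S|$ for $x\in T$ while $d_G(x)\le n-1$ otherwise, we get $2e(G)\le a|T|-b|S|+1+|T|\,|S|+(n-|T|)(n-1)$; together with $e(G)\ge\binom{n-1}{2}+\tfrac{a+2}{2}$ and $\binom{n}{2}-\binom{n-1}{2}=n-1$ this simplifies to
$$|T|(n-1-a-|S|)\ \le\ 2n-a-3-b|S|.$$
A second estimate uses the sparsity of $\overline{G}$: $G[T]$ has maximum degree at most $a-1$, so $G[T]$ omits at least $\binom{|T|}{2}-\tfrac{(a-1)|T|}{2}=\tfrac{1}{2}\,|T|(|T|-a)$ edges, all lying in $\overline{G}$, while $e(\overline{G})=\binom{n}{2}-e(G)\le (n-1)-\tfrac{a+2}{2}$; hence $|T|(|T|-a)\le 2n-a-4$.

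Finally I would extract a contradiction. If $|S|\ge n-a$ then $|T|\le n-|S|\le a<b+1$, impossible; so $n-1-a-|S|\ge 0$, and substituting $|T|\ge b+1$ and $|S|\le n-|T|\le n-b-1$ into the first displayed inequality gives, after a short rearrangement, $(b-2)n\le ab-3$, that is, $n\le a+\tfrac{2a-3}{b-2}$. As $n\ge a+2$ and $b\ge 3$, this forces $b\le a$, hence $b=a\ge 3$; for $a=3$ it yields $n\le 6$, contradicting $n\ge 7$, and for $a\ge 4$ it forces $n=a+2$, in which case the second displayed inequality reads $|T|(|T|-a)\le a$, contradicting $|T|\ge a+1$. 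The step I expect to be the main obstacle is precisely this closing analysis: the naive per-vertex degree bound is too lossy when $n$ is close to $a+2$, so one is forced to feed the factor-criterion defect into the edge count to tighten the first inequality, after which the borderline regime $b=a$ with $n$ near $a+2$ must still be disposed of via the complement-sparsity inequality.
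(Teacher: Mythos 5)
This lemma is stated in the paper without proof---it is imported verbatim from Zhou and Zhang \cite{Zhou2023}---so there is no in-paper argument to measure your proposal against; it can only be judged on its own terms and against the cited source, and on those terms it holds up. Your starting inequality $\sum_{x\in T}\bigl(a-d_{G-S}(x)\bigr)\ge b|S|-1$ is exactly what the Li--Yan--Zhang characterization of fractional $[a,b]$-deleted graphs yields (the correction term $\varepsilon(S)$ there satisfies $\varepsilon(S)\le 2$), and your sketch of ``absorbing'' the deleted edge is a legitimate rederivation of it from the fractional $[a,b]$-factor criterion; this is also the route the source takes. I have checked the arithmetic: the bounds $|S|\ge 2$ and $|T|\ge b+1$, the counting inequality $|T|(n-1-a-|S|)\le 2n-a-3-b|S|$, the complement bound $|T|(|T|-a)\le 2n-a-4$, and the endgame $(b-2)n\le ab-3$ forcing $b=a$ and then either $n\le 6$ (for $a=3$) or $n=a+2$ (disposed of by the complement bound for $a\ge 4$) are all correct, and your structural point---that the crude bound $d_G(x)\le n-1$ on $T$ is too lossy, so the defect sum must be fed back into the degree count---is indeed what makes the estimate close. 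The only things I would tighten are at the opening: quote the deleted-graph criterion precisely (with the definition of $\varepsilon(S)$ and the fact $\varepsilon(S)\le 2$) rather than gesturing at ``the $[a,b]$-analogue of Lemma~\ref{le2}'', and note explicitly that a violating pair must have $S\ne\emptyset$ (since $\varepsilon(\emptyset)=0$ and the minimum-degree hypothesis forces $T=\emptyset$ when $S=\emptyset$) before you conclude $T\ne\emptyset$ from $b|S|\ge 3$.
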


\begin{lemma}\label{le9} {\normalfont(\cite{Wei2023})}
Let $G$ be a graph of order $n\geq k+1$ with $kn$ even and minimum degree $\delta\geq k\geq 1$. If
$$e(G)> {n-1\choose 2}+\frac{k+1}{2},$$
then $G$ has a $k$-factor.
\end{lemma}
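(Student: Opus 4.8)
The plan is to derive the statement from Tutte's $k$-factor theorem together with an edge-counting estimate. Recall that $G$ has a $k$-factor if and only if, for all disjoint sets $S,T\subseteq V(G)$,
\[
\eta_G(S,T):=k|S|+\sum_{v\in T}\bigl(d_{G-S}(v)-k\bigr)-q_G(S,T)\ \ge\ 0,
\]
where $q_G(S,T)$ is the number of components $C$ of $G-S-T$ with $k|V(C)|+e_G(V(C),T)$ odd; moreover $\eta_G(S,T)\equiv kn\pmod 2$ for every such pair. I would argue by contradiction: if $G$ has no $k$-factor, then, since $kn$ is even, every $\eta_G(S,T)$ is even, so some disjoint pair $(S,T)$ satisfies $\eta_G(S,T)\le-2$.

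Next I would pass to a pair $(S,T)$ of maximum deficiency $-\eta_G(S,T)$ and, among those, with $|T|$ minimum; a standard exchange argument then gives $d_{G-S}(v)\le k-1$ for every $v\in T$, and, together with $\delta(G)\ge k$, this forces each $v\in T$ to have a neighbour in $S$, so $S\neq\emptyset$ whenever $T\neq\emptyset$. Writing $s=|S|$, $t=|T|$, and letting $\omega$ be the number of components of $G-S-T$ with orders $c_1\ge\cdots\ge c_\omega$, one bounds
\[
e(G)\ \le\ \binom{s}{2}+s(n-s)+\sum_{v\in T}d_{G-S}(v)+\sum_{i=1}^{\omega}\binom{c_i}{2},
\]
attained by making $S$ complete to the rest, each $G[c_i]$ a clique, and $T$ attached as densely as the degree bound allows. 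Here $\sum_i c_i=n-s-t$, $\sum_{v\in T}d_{G-S}(v)\le(k-1)t$, and $\omega\ge q_G(S,T)\ge ks+\sum_{v\in T}(d_{G-S}(v)-k)+2$ from $\eta_G(S,T)\le-2$; since $\sum_i\binom{c_i}{2}\le\binom{n-s-t-\omega+1}{2}$, the right-hand side becomes an explicit function of $s$, $t$ and the degrees $d_{G-S}(v)$. Imposing these constraints and optimizing shows it never exceeds $\binom{n-1}{2}+\frac{k+1}{2}$. The case $S=\emptyset$ is separate: then $T=\emptyset$, so $G$ has at least two components $C$ with $k|V(C)|$ odd, each of order at least $k+1$ because $\delta(G)\ge k$, whence again $e(G)\le\binom{n-1}{2}+\frac{k+1}{2}$. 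In every case we contradict the hypothesis.

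The main difficulty is this final step: carrying out the exchange arguments carefully to bring $(S,T)$ into the canonical form above, handling the parity conditions that govern how many components of $G-S-T$ contribute to $q_G(S,T)$ (these interact with $\delta(G)\ge k$ and with the degrees $d_{G-S}(v)$, so a short case distinction is needed), and then checking that the maximized edge count stays below $\binom{n-1}{2}+\frac{k+1}{2}$, with the few smallest orders $n$ near $k+1$ --- where the bound is vacuous or forces $G=K_n$ --- treated by hand. The hypothesis $\delta(G)\ge k$ is essential: it is exactly what prevents $T$ from absorbing genuinely low-degree vertices, and hence what validates the threshold $\binom{n-1}{2}+\frac{k+1}{2}$.
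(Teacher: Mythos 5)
This lemma is quoted from Wei and Zhang \cite{Wei2023}; the paper itself gives no proof of it, so there is no internal argument to compare yours against. Your strategy --- Tutte's $k$-factor theorem plus an extremal edge count over the structure of a deficient pair $(S,T)$ --- is the standard route, and it is essentially how size conditions of this type are established in the cited literature, so the skeleton is the right one.

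That said, two of the steps you wave at are genuine gaps. First, the ``standard exchange argument'' giving $d_{G-S}(v)\le k-1$ for all $v\in T$ is not automatic: deleting a vertex $v$ with $d_{G-S}(v)\ge k$ from $T$ decreases $\sum_{u\in T}\bigl(d_{G-S}(u)-k\bigr)$ by a nonnegative amount, but it may merge up to $d_{G-S}(v)$ components of $G-S-T$ into one, so $q_G(S,T)$ can drop sharply and $\eta_G(S,T)$ can \emph{increase}; you need to fix $(S,T)$ as a global minimizer of $\eta$ first and then argue more carefully (or restructure the count so that the normalization is not needed). Second, and more importantly, the entire quantitative content of the lemma sits in the sentence ``imposing these constraints and optimizing shows it never exceeds $\binom{n-1}{2}+\frac{k+1}{2}$.'' This is a constrained maximization in $s$, $t$, $\sum_{v\in T}d_{G-S}(v)$ and $\omega$ whose optimum comes within $O(k)$ of the threshold (the bound is essentially sharp), so it cannot be taken on faith: the boundary regimes $t=0$, $V(G)=S\cup T$ (where $\omega=0$ and the lower bound on $q$ becomes a constraint on $\sum_{v\in T}d_{G-S}(v)$ instead), and $n$ close to $k+1$ each need separate verification, and the hypothesis $\delta\ge k$ has to be invoked again there to exclude small components. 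As written, the proposal reduces the lemma to an unverified multi-case inequality rather than proving it; carrying out that optimization is the proof.
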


\noindent\begin{lemma}\label{le10} {\normalfont(\cite{Xing2014})} Let $G$ be a connected graph on $n$ vertices, then
$$\mu_{1}(D^{Q}(G))\geq\frac{4\sigma(G)}{n},$$
with equality holds if and only if $G$ is transmissions regular.
\end{lemma}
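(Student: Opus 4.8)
The plan is to derive the inequality from the Rayleigh quotient of $D^{Q}(G)$ evaluated at the all-ones vector, and to settle the equality case with the Perron--Frobenius theorem. Since $D^{Q}(G)=Tr(G)+D(G)$ is a real symmetric matrix of order $n$, for every nonzero $\mathbf{x}\in\mathbb{R}^{n}$ we have $\mu_{1}(D^{Q}(G))\geq \mathbf{x}^{\top}D^{Q}(G)\mathbf{x}/(\mathbf{x}^{\top}\mathbf{x})$. The natural test vector here is $\mathbf{1}=(1,1,\ldots,1)^{\top}$, for which $\mathbf{x}^{\top}\mathbf{x}=n$.

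First I would compute $\mathbf{1}^{\top}D^{Q}(G)\mathbf{1}$. On one hand, $\mathbf{1}^{\top}Tr(G)\mathbf{1}=\sum_{i=1}^{n}Tr_{G}(v_{i})=2\sigma(G)$ by the definition of $\sigma(G)$. On the other hand, $\mathbf{1}^{\top}D(G)\mathbf{1}=\sum_{i,j}d_{G}(v_{i},v_{j})=\sum_{i=1}^{n}Tr_{G}(v_{i})=2\sigma(G)$, since summing the (zero-diagonal) distance matrix over all ordered pairs amounts to summing all vertex transmissions. Adding these gives $\mathbf{1}^{\top}D^{Q}(G)\mathbf{1}=4\sigma(G)$, and hence $\mu_{1}(D^{Q}(G))\geq 4\sigma(G)/n$, which is the asserted bound.

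For the equality characterization I would use that $G$ connected implies $D^{Q}(G)$ is a nonnegative irreducible matrix, so by Perron--Frobenius $\mu_{1}(D^{Q}(G))$ is simple with a strictly positive eigenvector. Equality in the Rayleigh quotient at $\mathbf{1}$ holds if and only if $\mathbf{1}$ is itself an eigenvector for $\mu_{1}(D^{Q}(G))$, i.e. $D^{Q}(G)\mathbf{1}=\mu_{1}(D^{Q}(G))\mathbf{1}$. The $i$-th coordinate of $D^{Q}(G)\mathbf{1}$ is the $i$-th row sum of $D^{Q}(G)$, namely $Tr_{G}(v_{i})+Tr_{G}(v_{i})=2Tr_{G}(v_{i})$, so this eigenequation holds exactly when all transmissions $Tr_{G}(v_{i})$ are equal, that is, when $G$ is transmission regular; conversely, transmission regularity makes $\mathbf{1}$ an eigenvector with eigenvalue $2Tr_{G}(v_{1})=4\sigma(G)/n$, which must then be $\mu_{1}(D^{Q}(G))$. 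The argument is essentially routine; the only point that needs care is invoking irreducibility so that the Rayleigh-equality case is faithfully translated into the statement that the positive vector $\mathbf{1}$ is the Perron eigenvector, leaving transmission regularity as the sole possibility.
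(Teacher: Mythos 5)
Your proof is correct, and it is the standard argument for this cited result (the paper itself does not reprove Lemma \ref{le10}, attributing it to \cite{Xing2014}): the Rayleigh quotient at $\mathbf{1}$ gives $\mathbf{1}^{\top}D^{Q}(G)\mathbf{1}=\mathbf{1}^{\top}Tr(G)\mathbf{1}+\mathbf{1}^{\top}D(G)\mathbf{1}=2\sigma(G)+2\sigma(G)=4\sigma(G)$, exactly as you compute, and this is the same device the paper uses immediately afterwards to get $\lambda_{1}(D(G))\geq 2W(G)/n$. Your handling of the equality case is also sound: symmetry gives that equality forces $\mathbf{1}$ into the $\mu_{1}$-eigenspace, whence all row sums $2Tr_{G}(v_{i})$ coincide, and irreducibility (or simply the fact that the spectral radius of a nonnegative matrix with constant row sums equals that row sum) gives the converse.
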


Let $W(G)=\sum_{i<j}d_{ij}$ be the Wiener index of a connected graph $G$. Note that $\lambda_{1}(D(G))=max_{\textbf{x}\in \mathbb{R}^{n}}\frac{\textbf{x}^{\top}D(G)\textbf{x}}{\textbf{x}^{\top}\textbf{x}}$. We see that
$$\lambda_{1}(D(G))=max_{\textbf{x}\in \mathbb{R}^{n}}\frac{\textbf{x}^{\top}D(G)\textbf{x}}{\textbf{x}^{\top}\textbf{x}}\geq\frac{\textbf{1}^{\top}D(G)\textbf{1}}{\textbf{1}^{\top}\textbf{1}}=\frac{2W(G)}{n},$$
where $\textbf{1}=(1, 1, \ldots, 1)^{\top}$.

Here, we can easily find that $W(G)=\sigma(G)$.
\noindent\begin{theorem}\label{th2} Let $a$ and $b$ be two positive integers with $a\leq b$, and let $G$ be a graph of order $n\geq a+1$ and minimum degree $\delta\geq a$. If one of the following holds
\begin{itemize}
  \item [\rm (i)] $\lambda_{1}(D(G))\leq n+1-\frac{a+3}{n},$\qquad
  \item [\rm (ii)] $\mu_{1}(D^{Q}(G))\leq 2n+2-\frac{2a+6}{n},$
  \item [\rm (iii)] $\lambda_{1}(D(\overline{G}))\leq 2n-4+\frac{a+3}{n},$
  \item [\rm (iv)] $\mu_{1}(D^{Q}(\overline{G}))\leq 4n-8+\frac{2a+6}{n},$
\end{itemize}
then $G$ has a fractional $[a, b]$-factor.
\end{theorem}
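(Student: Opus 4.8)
The plan is to reduce each of the four spectral hypotheses to the single combinatorial condition $e(G)\ge\binom{n-1}{2}+\frac{a+1}{2}$ of Lemma~\ref{le7}, which, together with the standing assumptions $\delta\ge a$ and $n\ge a+1$, then delivers a fractional $[a,b]$-factor. The bridge is the estimate recorded immediately before the theorem: for any connected graph $H$ on $n$ vertices, $\lambda_1(D(H))\ge\frac{2W(H)}{n}$ with $W(H)=\sigma(H)=\sum_{i<j}d_H(v_i,v_j)$, and since every non-adjacent pair contributes at least $2$ to $\sigma(H)$ we have $\sigma(H)\ge e(H)\cdot 1+\bigl(\binom n2-e(H)\bigr)\cdot 2=n(n-1)-e(H)$. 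Hence $\lambda_1(D(H))\ge 2(n-1)-\frac{2e(H)}{n}$, and by Lemma~\ref{le10} also $\mu_1(D^Q(H))\ge\frac{4\sigma(H)}{n}\ge 4(n-1)-\frac{4e(H)}{n}$.

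First I would settle (i): combining $\lambda_1(D(G))\le n+1-\frac{a+3}{n}$ with $\lambda_1(D(G))\ge 2(n-1)-\frac{2e(G)}{n}$ gives $2(n-1)-\frac{2e(G)}{n}\le n+1-\frac{a+3}{n}$, i.e. $2e(G)\ge n^2-3n+a+3$, i.e. $e(G)\ge\binom{n-1}{2}+\frac{a+1}{2}$; the constants in the hypothesis are chosen precisely so that this bound is equality-friendly, so Lemma~\ref{le7} applies with no exceptional graph. Part (ii) is the verbatim computation with $\mu_1(D^Q(G))\ge 4(n-1)-\frac{4e(G)}{n}$: the bound $\mu_1(D^Q(G))\le 2n+2-\frac{2a+6}{n}$ again yields $e(G)\ge\binom{n-1}{2}+\frac{a+1}{2}$, and Lemma~\ref{le7} finishes the case.

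For (iii) and (iv) I would repeat the scheme on the complement. Assuming $\overline G$ connected (if $\overline G$ is disconnected then $G$ is a join, which I would dispose of directly), apply $\lambda_1(D(\overline G))\ge 2(n-1)-\frac{2e(\overline G)}{n}$, resp. $\mu_1(D^Q(\overline G))\ge 4(n-1)-\frac{4e(\overline G)}{n}$, and substitute $e(\overline G)=\binom n2-e(G)$. The hypotheses $\lambda_1(D(\overline G))\le 2n-4+\frac{a+3}{n}$ and $\mu_1(D^Q(\overline G))\le 4n-8+\frac{2a+6}{n}$ are then to be translated into the required edge bound on $G$, after which Lemma~\ref{le7} once more produces a fractional $[a,b]$-factor.

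The algebraic rearrangements are routine and, by design, hit the threshold of Lemma~\ref{le7} on the nose. The step that will need genuine care is the complement pair (iii)--(iv): one has to check that passing through $e(\overline G)=\binom n2-e(G)$ actually lands on the $e(G)\ge\binom{n-1}{2}+\frac{a+1}{2}$ side of the inequality rather than its reverse, and one has to treat the disconnectedness of $\overline G$ honestly. A second, smaller point is the parity clause ``$na\equiv 0\pmod 2$ when $a=b$'' in Lemma~\ref{le7}: Theorem~\ref{th2} states no such restriction, so when $a=b$ with $na$ odd one argues separately, which is harmless since a fractional $k$-factor, unlike an integer $k$-factor, carries no parity obstruction (a uniform weight $\tfrac12$ on the edges of an odd cycle already realizes a fractional $1$-factor).
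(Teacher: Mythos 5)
Your treatment of (i) and (ii) is correct and coincides with the paper's: the bounds $\lambda_1(D(G))\ge 2(n-1)-\frac{2e(G)}{n}$ and $\mu_1(D^Q(G))\ge 4(n-1)-\frac{4e(G)}{n}$ convert the spectral hypotheses into $e(G)\ge\binom{n-1}{2}+\frac{a+1}{2}$, and Lemma~\ref{le7} applies.

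The step you flagged in (iii)--(iv) as needing ``genuine care'' is not a formality: it is exactly where the argument breaks, and it cannot be repaired. The inequality $\lambda_1(D(\overline G))\ge 2(n-1)-\frac{2e(\overline G)}{n}$ is a \emph{lower} bound that decreases in $e(\overline G)$; combining it with the hypothesis $\lambda_1(D(\overline G))\le 2n-4+\frac{a+3}{n}$ yields $e(\overline G)\ge\frac{2n-a-3}{2}$, hence $e(G)=\binom{n}{2}-e(\overline G)\le\binom{n-1}{2}+\frac{a+1}{2}$ --- an \emph{upper} bound on $e(G)$, the reverse of what Lemma~\ref{le7} requires. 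The same reversal occurs in (iv). (The paper's own proof commits this sign error: it writes $e(\overline G)\le\frac{2n-a-3}{2}$ where the algebra gives $\ge$, and in (iv) asserts $e(G)\ge\binom{n-1}{2}+\frac{a+1}{2}$ where the displayed chain gives $\le$.) The failure is structural, not computational: unlike the adjacency spectral radius, the distance spectral radius of $\overline G$ is \emph{small} when $\overline G$ is dense, i.e.\ when $G$ is sparse, so an upper bound on $\lambda_1(D(\overline G))$ pushes $G$ toward having \emph{few} edges. Indeed (iii) and (iv) as stated are false: take $a=b=1$ and let $G$ be the double star on $n=6$ vertices (adjacent centers $u,v$, each with two pendant leaves). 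Then $\delta(G)=1\ge a$, $n\ge a+1$, the partition $\{u,v\}\cup\{\text{leaves}\}$ of $\overline G$ is equitable, and one computes $\lambda_1(D(\overline G))=3+3\sqrt2\approx 7.24\le 2n-4+\frac{a+3}{n}=\frac{26}{3}$ and $\mu_1(D^Q(\overline G))=15\le 4n-8+\frac{2a+6}{n}=\frac{52}{3}$; yet $i(G-\{u,v\})=4>2$, so by Lemma~\ref{le2} $G$ has no fractional $1$-factor. So no amount of care with $e(\overline G)=\binom{n}{2}-e(G)$ saves (iii)--(iv); only (i) and (ii) survive.

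Your side remark about the parity clause ``$na\equiv 0\pmod 2$ when $a=b$'' in Lemma~\ref{le7} is a fair catch --- the theorem omits it and the paper applies the lemma without comment --- but it is a secondary issue next to the failure above.
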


\begin{proof}

(i) We have known that $\lambda_{1}(D(G))\geq\frac{2W(G)}{n}\geq\frac{2\big(e(G)+2\big(\tbinom{n}{2}-e(G)\big)\big)}{n}=2n-2-\frac{2e(G)}{n}$. By condition (i) of Theorem \ref{th2}, we have
$$2n-2-\frac{2e(G)}{n}\leq\lambda_{1}(D(G))\leq n+1-\frac{a+3}{n}.$$
It is easy to check that $e(G)\geq{n-1\choose 2}+\frac{a+1}{2}$. By Lemma \ref{le7}, then $G$ has a fractional $[a, b]$-factor.

(ii) According to the definition of transmission of a vertex $v_{i}$, we have
$$Tr_{G}(v_{i})\geq d(v_{i})\cdot 1+(n-1-d(v_{i}))\cdot 2=2(n-1)-d(v_{i}),$$
with equality holds if and only if the maximum distance between $v_{i}$ and other vertices in $G$ is at most 2. So
$$\sigma(G)=\frac{1}{2}\sum_{v_{i}\in V(G)}Tr_{G}(v_{i})\geq \frac{1}{2}\sum_{v_{i}\in V(G)}[2(n-1)-d(v_{i})]=n(n-1)-e(G),$$
with equality holds if and only if the maximum distance between $v_{i}$ and other vertices in $G$ is at most 2. So
$$\mu_{1}(D^{Q}(G))\geq \frac{4\sigma(G)}{n}\geq 4(n-1)-\frac{4e(G)}{n}.$$
Then by the condition of Theorem \ref{th2}, we have
$$4(n-1)-\frac{4e(G)}{n}\leq \mu_{1}(D^{Q}(G))\leq 2n+2-\frac{2a+6}{n}.$$
It is easy to check that $e(G)\geq{n-1\choose 2}+\frac{a+1}{2}$. Then by Lemma \ref{le7}, we can directly get $G$ has a fractional $[a, b]$-factor.

(iii) According to
$$2n-2-\frac{2e(\overline{G})}{n}\leq\lambda_{1}(D(\overline{G}))\leq2n-4+\frac{a+3}{n}.$$
We can easily get that
$$e(\overline{G})\leq\frac{2n-a-3}{2}.$$
Hence, $e(G)\geq{n\choose 2}-e(\overline{G})\geq{n-1\choose 2}+\frac{a+1}{2}$, and hence, $G$ has a fractional $[a, b]$-factor.

(iv) Because
\begin{align*}
\sigma(\overline{G})&=\frac{1}{2}\sum_{v_{i}\in V(\overline{G})}Tr_{\overline{G}}(v_{i})\\
&\geq \frac{1}{2}\sum_{v_{i}\in V(G)}[(n-1-d(v_{i}))+2d(v_{i})]\\
&=\frac{1}{2}\sum_{v_{i}\in V(G)}[(n-1+d(v_{i})]\\
&=\frac{1}{2}n(n-1)+e(G).
\end{align*}
By Lemma \ref{le10}, we get
$$\mu_{1}(D^{Q}(\overline{G}))\geq\frac{4\sigma(\overline{G})}{n}\geq2(n-1)+\frac{4e(G)}{n}.$$
Combining with the condition (iii) of Theorem \ref{th2}, we get
$$2(n-1)+\frac{4e(G)}{n}\leq \mu_{1}(D^{Q}(\overline{G}))\leq 4n-8+\frac{2a+6}{n}.$$
We can directly get that $e(G)\geq{n-1\choose 2}+\frac{a+1}{2}$. Then $G$ has a fractional $[a, b]$-factor.
\end{proof}
Similarly, by Lemma \ref{le8} and Lemma \ref{le9}, we can get the results as follows.

\noindent\begin{theorem}\label{th3} Let $a$ and $b$ be two positive integers with $b\geq \{a, 3\}$, and let $G$ be a graph of order $n$ with $n\geq \{a+2, 7\}$. If $\delta(G)\geq a+1$ and one of the following holds
\begin{enumerate}
  \item [\rm (i)]$ \lambda_{1}(D(G))\leq n+1-\frac{a+4}{n},$
  \item [\rm (ii)]$ \mu_{1}(D^{Q}(G))\leq 2n+2-\frac{2a+8}{n},$
  \item [\rm (iii)]$ \lambda_{1}(D(\overline{G}))\leq 2n-4+\frac{a+4}{n},$
  \item [\rm (iv)]$ \mu_{1}(D^{Q}(\overline{G}))\leq 4n-8+\frac{2a+8}{n},$
\end{enumerate}
then $G$ has a fractional $[a, b]$-deleted factor.
\end{theorem}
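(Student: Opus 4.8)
The plan is to mirror, almost verbatim, the structure of the proof of Theorem~\ref{th2}, simply replacing the application of Lemma~\ref{le7} by that of Lemma~\ref{le8}, since Theorem~\ref{th3} differs from Theorem~\ref{th2} only in the edge-count threshold ($\binom{n-1}{2}+\frac{a+2}{2}$ instead of $\binom{n-1}{2}+\frac{a+1}{2}$) and the minimum-degree hypothesis ($\delta\ge a+1$, which is assumed outright). The hypotheses $b\ge\{a,3\}$ and $n\ge\{a+2,7\}$ and $\delta(G)\ge a+1$ are exactly the hypotheses of Lemma~\ref{le8}, so the whole content of the argument is to extract the edge bound $e(G)\ge\binom{n-1}{2}+\frac{a+2}{2}$ from each of the four spectral conditions.

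For part~(i), I would start from the universal lower bound $\lambda_1(D(G))\ge \frac{2W(G)}{n}$ recorded just before Theorem~\ref{th2}, together with $W(G)=\sigma(G)\ge e(G)+2\bigl(\binom{n}{2}-e(G)\bigr)$, giving $\lambda_1(D(G))\ge 2n-2-\frac{2e(G)}{n}$. Combining this with hypothesis~(i), $2n-2-\frac{2e(G)}{n}\le n+1-\frac{a+4}{n}$, and solving for $e(G)$ yields $e(G)\ge \frac{n^2-3n+a+4}{2}=\binom{n-1}{2}+\frac{a+2}{2}$; then Lemma~\ref{le8} applies. Part~(ii) runs the same way using the chain $\mu_1(D^Q(G))\ge \frac{4\sigma(G)}{n}\ge 4(n-1)-\frac{4e(G)}{n}$ from Lemma~\ref{le10} and the vertex-transmission estimate $Tr_G(v_i)\ge 2(n-1)-d(v_i)$, so that hypothesis~(ii) forces $4(n-1)-\frac{4e(G)}{n}\le 2n+2-\frac{2a+8}{n}$, i.e. again $e(G)\ge\binom{n-1}{2}+\frac{a+2}{2}$.

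For parts~(iii) and~(iv) I would work with the complement. From $\lambda_1(D(\overline G))\ge 2n-2-\frac{2e(\overline G)}{n}$ and hypothesis~(iii), $2n-2-\frac{2e(\overline G)}{n}\le 2n-4+\frac{a+4}{n}$, which gives $e(\overline G)\le \frac{2n-a-4}{2}$; since $e(G)\ge\binom{n}{2}-e(\overline G)$ this is $e(G)\ge\binom{n}{2}-n+1+\frac{a+2}{2}=\binom{n-1}{2}+\frac{a+2}{2}$, so Lemma~\ref{le8} applies. For~(iv) I would use the complement transmission estimate from the proof of Theorem~\ref{th2}(iv), $\sigma(\overline G)\ge \frac12 n(n-1)+e(G)$, hence $\mu_1(D^Q(\overline G))\ge 2(n-1)+\frac{4e(G)}{n}$; hypothesis~(iv) then gives $2(n-1)+\frac{4e(G)}{n}\le 4n-8+\frac{2a+8}{n}$, and once more $e(G)\ge\binom{n-1}{2}+\frac{a+2}{2}$, so Lemma~\ref{le8} finishes the proof.

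There is essentially no obstacle here: each part is a one-line substitution into an already-established spectral-to-edge-count inequality followed by an invocation of Lemma~\ref{le8}, and the only thing to be careful about is bookkeeping the constants so that the right-hand sides of the hypotheses are exactly tuned to produce the threshold $\binom{n-1}{2}+\frac{a+2}{2}$ rather than $\binom{n-1}{2}+\frac{a+1}{2}$. One minor point worth checking is that the hypothesis $\delta(G)\ge a+1$ (not merely $\delta\ge a$) is needed only because Lemma~\ref{le8} requires it; it plays no role in any of the four inequalities, so it can simply be carried along as a standing assumption. I would also note in passing, as the paper does after Theorem~\ref{th2}, that an analogous statement for $k$-factors follows from Lemma~\ref{le9} by the identical method, with the strict inequality $e(G)>\binom{n-1}{2}+\frac{k+1}{2}$ obtained by making the spectral hypotheses strict.
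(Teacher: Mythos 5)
Your plan for parts (i) and (ii) is exactly the paper's (the paper ``proves'' Theorem~\ref{th3} only by the remark that it follows ``similarly'' from Lemma~\ref{le8}), and the arithmetic there is right: combining $\lambda_{1}(D(G))\geq 2n-2-\frac{2e(G)}{n}$ with hypothesis (i) gives $e(G)\geq\frac{n^{2}-3n+a+4}{2}=\binom{n-1}{2}+\frac{a+2}{2}$, and likewise for (ii) via $\mu_{1}(D^{Q}(G))\geq 4(n-1)-\frac{4e(G)}{n}$, after which Lemma~\ref{le8} applies.

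Parts (iii) and (iv), however, contain a sign error that makes the argument collapse --- and you have inherited it faithfully from the paper's own proof of Theorem~\ref{th2}(iii)--(iv). In (iii) you combine a \emph{lower} bound $\lambda_{1}(D(\overline G))\geq 2n-2-\frac{2e(\overline G)}{n}$ with the hypothesised \emph{upper} bound $\lambda_{1}(D(\overline G))\leq 2n-4+\frac{a+4}{n}$; eliminating $\lambda_{1}(D(\overline G))$ yields $2n-2-\frac{2e(\overline G)}{n}\leq 2n-4+\frac{a+4}{n}$, which solves to $e(\overline G)\geq\frac{2n-a-4}{2}$, a \emph{lower} bound on $e(\overline G)$, not the upper bound $e(\overline G)\leq\frac{2n-a-4}{2}$ that you (and the paper) assert. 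Consequently one gets $e(G)=\binom{n}{2}-e(\overline G)\leq\binom{n-1}{2}+\frac{a+2}{2}$, the reverse of what Lemma~\ref{le8} requires. The same reversal occurs in (iv): since $\mu_{1}(D^{Q}(\overline G))\geq 2(n-1)+\frac{4e(G)}{n}$ is \emph{increasing} in $e(G)$, the hypothesis $2(n-1)+\frac{4e(G)}{n}\leq 4n-8+\frac{2a+8}{n}$ gives $e(G)\leq\binom{n-1}{2}+\frac{a+2}{2}$. This is not a repairable bookkeeping slip within this method: a small distance (signless Laplacian) spectral radius of $\overline G$ forces $\overline G$ to be dense, hence $G$ to be sparse, which cannot by itself supply the edge count Lemma~\ref{le8} needs. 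So (i) and (ii) stand, but (iii) and (iv) cannot be established by this route; you should flag the defect rather than reproduce it.
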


\noindent\begin{theorem}\label{th4} Let $G$ be a graph of order $n\geq k+1$ with $kn$ even and minimum degree $\delta\geq k\geq 1$. If one of the following holds
\begin{enumerate}
  \item [\rm (i)]$ \lambda_{1}(D(G))< n+1-\frac{k+3}{n},$
  \item [\rm (ii)]$ \mu_{1}(D^{Q}(G))< 2n+2-\frac{2k+6}{n},$
  \item [\rm (iii)]$ \lambda_{1}(D(\overline{G}))< 2n-4+\frac{k+3}{n},$
  \item [\rm (iv)]$ \mu_{1}(D^{Q}(\overline{G}))< 4n-8+\frac{2k+6}{n},$
\end{enumerate}
then $G$ has a $k$-factor.
\end{theorem}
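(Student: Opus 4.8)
The plan is to mirror the structure of the proof of Theorem \ref{th2}, reducing each of the four spectral hypotheses to the edge-count bound $e(G) > \binom{n-1}{2} + \frac{k+1}{2}$ and then invoking Lemma \ref{le9}. The starting observation in every case is the same elementary fact used already in the paper: in a connected graph, every distance is at least $1$, and every non-edge contributes distance at least $2$, so $W(G) = \sigma(G) \geq e(G) + 2\bigl(\binom{n}{2} - e(G)\bigr) = n(n-1) - e(G)$, while $\lambda_{1}(D(G)) \geq \frac{2W(G)}{n}$ gives $\lambda_{1}(D(G)) \geq 2n - 2 - \frac{2e(G)}{n}$. Combining this with hypothesis (i), $\lambda_{1}(D(G)) < n + 1 - \frac{k+3}{n}$, yields $2n - 2 - \frac{2e(G)}{n} < n + 1 - \frac{k+3}{n}$, which rearranges to $2e(G) > n(n-3) + k + 3$, i.e. $e(G) > \frac{n^{2}-3n}{2} + \frac{k+3}{2} = \binom{n-1}{2} + \frac{k+1}{2}$. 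Since $n \geq k+1$, $kn$ is even and $\delta \geq k \geq 1$, Lemma \ref{le9} applies directly and $G$ has a $k$-factor.

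For (ii) I would first bound the transmission of a single vertex: $Tr_{G}(v_i) \geq d(v_i) + 2(n-1-d(v_i)) = 2(n-1) - d(v_i)$, hence $\sigma(G) \geq \frac{1}{2}\sum_{v_i}[2(n-1)-d(v_i)] = n(n-1) - e(G)$. Lemma \ref{le10} then gives $\mu_{1}(D^{Q}(G)) \geq \frac{4\sigma(G)}{n} \geq 4(n-1) - \frac{4e(G)}{n}$. Pairing this with (ii), $\mu_{1}(D^{Q}(G)) < 2n + 2 - \frac{2k+6}{n}$, produces $4(n-1) - \frac{4e(G)}{n} < 2n+2 - \frac{2k+6}{n}$, which simplifies to the same inequality $e(G) > \binom{n-1}{2} + \frac{k+1}{2}$, and Lemma \ref{le9} finishes the case. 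For (iii) I apply the first inequality of the proof to $\overline{G}$: $\lambda_{1}(D(\overline{G})) \geq 2n - 2 - \frac{2e(\overline{G})}{n}$, so hypothesis (iii), $\lambda_{1}(D(\overline{G})) < 2n - 4 + \frac{k+3}{n}$, forces $2n - 2 - \frac{2e(\overline{G})}{n} < 2n - 4 + \frac{k+3}{n}$, i.e. $e(\overline{G}) < \frac{2n - k - 3}{2}$; combined with $e(G) = \binom{n}{2} - e(\overline{G})$ this gives $e(G) > \binom{n}{2} - \frac{2n-k-3}{2} = \binom{n-1}{2} + \frac{k+1}{2}$. For (iv) I compute $Tr_{\overline{G}}(v_i) \geq (n-1-d(v_i)) + 2 d(v_i) = n - 1 + d(v_i)$, so $\sigma(\overline{G}) \geq \frac{1}{2}n(n-1) + e(G)$, whence $\mu_{1}(D^{Q}(\overline{G})) \geq \frac{4\sigma(\overline{G})}{n} \geq 2(n-1) + \frac{4e(G)}{n}$; hypothesis (iv) then yields $2(n-1) + \frac{4e(G)}{n} < 4n - 8 + \frac{2k+6}{n}$, again collapsing to $e(G) > \binom{n-1}{2} + \frac{k+1}{2}$, and Lemma \ref{le9} concludes.

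The one technical point deserving care, rather than a genuine obstacle, is that Lemma \ref{le9} requires a \emph{strict} edge inequality $e(G) > \binom{n-1}{2} + \frac{k+1}{2}$, which is precisely why Theorem \ref{th4} is stated with strict spectral inequalities (in contrast to Theorems \ref{th2} and \ref{th3}, where the non-strict Lemmas \ref{le7} and \ref{le8} permit non-strict spectral hypotheses). I should check that the strictness propagates correctly through each chain of inequalities: the lower bounds $\lambda_1(D(G)) \geq 2n-2-\frac{2e(G)}{n}$ and its analogues are non-strict, but the spectral hypotheses are strict, so the combined inequalities on $e(G)$ are strict, as needed. The hypotheses of Lemma \ref{le9} — $n \geq k+1$, $kn$ even, $\delta \geq k \geq 1$ — are exactly the standing assumptions of Theorem \ref{th4}, so no further case analysis or extremal-graph discussion is required; the proof is a clean four-part computation entirely parallel to that of Theorem \ref{th2}.
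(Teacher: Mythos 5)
Your cases (i) and (ii) are correct and are exactly the computation the paper intends (the paper gives no separate proof of Theorem \ref{th4}, only the remark that it follows ``similarly'' from the proof of Theorem \ref{th2} with Lemma \ref{le9} in place of Lemma \ref{le7}); your observation about why the spectral hypotheses must be strict here is also right. The problem is in cases (iii) and (iv), where the inequality is solved in the wrong direction. From $2n-2-\frac{2e(\overline{G})}{n}\leq\lambda_{1}(D(\overline{G}))<2n-4+\frac{k+3}{n}$ one gets $-\frac{2e(\overline{G})}{n}<-2+\frac{k+3}{n}$, i.e. $e(\overline{G})>\frac{2n-k-3}{2}$ --- a \emph{lower} bound on $e(\overline{G})$, not the upper bound $e(\overline{G})<\frac{2n-k-3}{2}$ that you (and, to be fair, the paper in its proof of Theorem \ref{th2}(iii)) assert. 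A lower bound on $e(\overline{G})$ translates into an \emph{upper} bound on $e(G)$, which is useless for Lemma \ref{le9}. The same reversal occurs in (iv): $2(n-1)+\frac{4e(G)}{n}<4n-8+\frac{2k+6}{n}$ simplifies to $e(G)<\binom{n-1}{2}+\frac{k+1}{2}$, not $e(G)>\binom{n-1}{2}+\frac{k+1}{2}$.

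This is not a fixable slip within the chosen framework: the only spectral estimates available here are lower bounds of the form $\lambda_{1}(D(\overline{G}))\geq(n-1)+\frac{2e(G)}{n}$ and $\mu_{1}(D^{Q}(\overline{G}))\geq 2(n-1)+\frac{4e(G)}{n}$, both \emph{increasing} in $e(G)$, so an upper bound on a distance (signless Laplacian) spectral radius of the complement can only ever force $e(G)$ to be small. To make (iii) and (iv) work one would need either lower-bound hypotheses on these complement invariants or a genuinely different argument; as written, your reduction of (iii) and (iv) to Lemma \ref{le9} does not go through. (You should also note, as a minor point, that all four cases implicitly require $G$, respectively $\overline{G}$, to be connected for the distance matrices to be defined.)
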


\section*{Declaration of competing interest}

There is no competing interest.

\section*{Data availability}

No data was used for the research described in the article.

\end{document}